\documentclass[11pt]{article}

\usepackage{mathrsfs,bbm,bm,enumitem, diagbox}
\usepackage[margin=1in]{geometry}

\newcommand{\BP}{\mathbb{P}}

\newcommand{\E}{\mathbb{E}}
\newcommand{\Id}{\mathrm{Id}}

\newcommand{\les}{\lesssim}

\newcommand{\norm}[1]{\left\|#1\right\|}

\newcommand{\TV}{\mathrm{TV}}

\usepackage{amsmath,graphicx, amsthm, amssymb, epsfig}
\usepackage{xcolor}
\usepackage[normalem]{ulem}
\numberwithin{equation}{section}

\usepackage{hyperref}
\hypersetup{
    colorlinks=true,
    linkcolor=blue,
    filecolor=magenta,      
    urlcolor=cyan,
    citecolor=teal,
    pdfpagemode=FullScreen,
}

\newcommand{\e}{\epsilon}
\newcommand{\ga}{\gamma}
\newcommand{\de}{\delta}
\newcommand{\br}{\mathbb{R}}
\newcommand{\N}{\mathbb{N}}
\newcommand{\BZ}{\mathbb{Z}}

\newcommand{\pa}{\partial}

\newcommand{\al}{\alpha}
\newcommand{\la}{\lambda}

\newcommand{\be}{\begin{equation}}
\newcommand{\ee}{\end{equation}}
\def\bs#1\es{
    \begin{equation}\begin{split}
    #1
    \end{split}\end{equation}
}
\def\bsn#1\esn{
    \begin{equation*}\begin{split}
    #1
    \end{split}\end{equation*}
}

\newcommand{\dd}{\mathrm{d}}
\newcommand{\op}{\mathrm{op}}

\newcommand{\bma}{\begin{pmatrix}}
\newcommand{\ema}{\end{pmatrix}}

\newcommand{\I}{\mathcal  I}

\newcommand{\CK}{\mathcal{K}}
\newcommand{\CL}{\mathcal{L}}
\newcommand{\CN}{\mathcal{N}}

\newcommand{\tsum}{\textstyle\sum}

\newtheorem{theorem}{Theorem}[section]
\newtheorem{lemma}[theorem]{Lemma}

\theoremstyle{definition}
\newtheorem{assumption}[theorem]{Assumption}

\begin{document}

\title{Discrete to continuum limits in Bayesian inverse problems}
\author{Alexander Katsevich\\
School of Data, Mathematical, and Statistical Sciences,\\ University of Central Florida\\ \texttt{alexander.katsevich@ucf.edu}}

\date{}
\maketitle

\newcommand{\kind}{\mathcal J_N}
\newcommand{\Psif}{\Psi^{\mathrm{full}}}
\newcommand{\Psinf}{\Psi_N^{\mathrm{full}}}

\begin{abstract}
We develop a posterior-level discrete-to-continuum theory for Bayesian inverse problems 
whose finite-dimensional priors arise from local finite-difference regularization and whose likelihoods are of a general convex GLM-type form. In contrast to continuum-first approaches, the continuum prior and posterior are not assumed at the outset but are constructed as limits of the finite-dimensional probability measures used in numerical computation.

First, with the total information parameter $\tau$ and the regularization strength $\kappa$ fixed, we prove weak convergence on $L^2$ of the reconstructed discrete Gaussian priors and posterior measures to well-defined continuum laws. We then consider a coupled grid-refinement and small-noise limit in which $N\to\infty$ and $\tau_N,\kappa_N\to\infty$, while $\kappa_N/\tau_N$ remains fixed. Under explicit growth conditions relating $N$ and $\tau_N$, we prove that the reconstructed discrete MAP estimates converge to the unique minimizer $u_*$ of the limiting continuum cost functional, that the reconstructed posterior measures concentrate at $u_*$, and that their centered and rescaled fluctuations converge to $\mathcal N(\vec 0,Q^{-1})$, where $Q$ is the Hessian of the continuum cost functional at $u_*$. Finally, we show that the same deterministic limit and Gaussian fluctuation law are obtained by first constructing the continuum posterior and then taking its small-noise, high-regularization limit.
\end{abstract}

\section{Introduction}

An inverse problem seeks to reconstruct a discrete approximation $\vec f_N\in\br^N$ to an unknown object $f$ from $N$ noisy measurements of its image, $\CK f$, under a forward operator $\CK$. We assume that $\CK$ is linear. In the Bayesian formulation, one specifies a prior probability law $\nu_N$ for $\vec f_N$ and a likelihood for the observed data. If the negative log-likelihood is represented by a potential $\Psi_N$, the posterior measure has the structure
\be
\dd\mu_N(\vec f_N)=Z_N^{-1}\exp\{-\Psi_N(\vec f_N)\}\,\dd\nu_N(\vec f_N),
\ee
where $Z_N$ is the normalizing constant. The dependence of $\Psi_N$ on the observed data is suppressed for simplicity.

As is customary, by solving a Bayesian inverse problem (BIP), we mean obtaining an accurate approximation of the posterior measure $\mu_N$, rather than merely computing a maximum a posteriori (MAP) estimate \cite{cotter2010}. Although every numerical implementation of a BIP is finite-dimensional, the resulting problems are often very high-dimensional. Their formulation and analysis in an infinite-dimensional setting offer numerous advantages \cite[Introduction]{Dashti2017}.

Passing from a discrete reconstruction problem to a continuum formulation removes many grid-dependent details and makes the underlying operators and function spaces easier to study. Once a posterior measure $\mu$ on a continuum space has been constructed, one can apply the tools of infinite-dimensional probability and functional analysis to establish well-posedness, stability, approximation, and asymptotic properties of the BIP; see, for example, \cite{stuart2010,Dashti2017,nickl2023}. 

The continuum formulation also makes it possible to study posterior contraction, consistency, information bounds, efficiency, and Bernstein--von Mises limits using methods from infinite-dimensio\-nal statistics; see \cite{knapik2011,MNP2019,MNP2021,nickl2023a} and the references therein. Continuum posterior measures can also be analyzed using information-field-theoretic and path-integral representations; see, for example, \cite{alberts2023, chang2014}. 

BIPs on infinite-dimensional state spaces have been extensively developed; see, for example, \cite{fitzpatrick1991,stuart2010,Dashti2017, nickl2023}. Most existing continuum Bayesian theories begin by specifying a prior $\nu$ and likelihood on an infinite-dimensional function space, thereby defining a posterior $\mu$, and subsequently study their finite-dimensional approximations, $\nu_N$ and $\mu_N$, respectively. This continuum-first point of view underlies the general framework of \cite{stuart2010,Dashti2017} and the approximation results in \cite{cotter2010,dashti2011}. Closely related discretization-invariance and posterior-convergence results were obtained in \cite{lassas2009, lasanen2012, lasanen2012a, helin2009, garciatrillos2018}. In these works, a continuum probability law is already part of the starting framework or is otherwise specified independently of the numerical regularization.

In this regard, it is important to note that even though a continuum unknown may be physically natural, this does not by itself provide a probability law on the corresponding function space. The existence of a continuum prior and posterior is therefore a substantive measure-theoretic assumption. 

In most computational settings, however, finite-dimensional BIPs are constructed in a different way. One begins directly with a data vector $\vec g_N$, a finite-dimensional unknown $\vec f_N$, a discrete forward operator $\CK_N$, and a matrix-valued regularizer. The resulting prior $\nu_N$ and posterior $\mu_N$ are ordinary probability measures on $\br^N$. Although a continuum model may be implicit in the motivation, no continuum prior or posterior is used.

Consequently, results proved for a postulated continuum posterior do not automatically provide a reliable guide to the behavior of the finite-dimensional posteriors arising in computation. Their numerical and theoretical relevance depends on whether the chosen discretization converges to that continuum Bayesian model. Establishing this connection is therefore a necessary step before infinite-dimensional results can be used to draw conclusions about the discrete BIPs. 

Accordingly, the direction taken in the present paper is reversed compared with the continuum-first approaches. We begin with finite-dimensional BIPs produced by a standard numerical discretization. In particular, the discrete priors are defined through local finite-difference precision matrices and need not be exact projections or finite-dimensional marginals of a pre-existing continuum Gaussian measure. We then establish that these finite-dimensional priors and posteriors \emph{determine} a continuum BIP as the discretization is refined, i.e., as $N\to\infty$. 

To clarify, in this setting the parameter $\tau$, which represents the total information contained in the data, and the regularization strength $\kappa$ are held fixed as $N\to\infty$. Thus, the effective information weight of each individual measurement is of order $\tau/N$ and tends to zero, while the total information remains fixed.


To the best of our knowledge, this is the first posterior-level discrete-to-continuum result in which both the prior $\nu_N$ and the likelihood are initially specified through a standard local numerical discretization, and the continuum posterior $\mu$ is constructed from the  finite-dimensional posteriors $\mu_N$. \emph{No measure-theoretic continuum assumption is made at the outset.}

A second setting arises when the BIP involves a small or large parameter, as in a small-noise regime. The key question is then how fine the discretization must be for the finite-dimensional BIP to approximate the relevant continuum limit, and how large the resulting error is. Mathematically, this leads to a regime in which both the discretization dimension $N$ and the total information in the data, represented by $\tau_N$, tend to infinity. The problem is therefore to identify the continuum \emph{coupled limit} of the finite-dimensional BIPs as $N\to\infty$ and $\tau_N\to\infty$ simultaneously. This limit does not follow from the fixed-$\tau$ result, since convergence as $N\to\infty$ for each fixed $\tau$ need not be uniform as $\tau\to\infty$.

There is a useful parallel between BIPs and statistical field theories. A Bayesian posterior has the form of a Gibbs measure, with the negative log-likelihood and regularization terms playing the role of an energy or action. This connection has been used in mathematical
work on path-integral formulations, information field theory, and
Langevin methods for BIPs \cite{chang2014, alberts2023, garbuno-inigo2020, liu2025}.
In this analogy, the numerical grid may be viewed as a short-distance cutoff
\cite[Sections~8.7--8.9]{zinn-justin2021}. In the related constructive field-theory program, one begins with cutoff or otherwise regularized models and proves that the regularization can be removed to obtain a well-defined continuum probability measure; see, for example, \cite{gubinelli2021}.

We use the term \emph{constructive BIP} in the same sense: the continuum prior $\nu$ and posterior $\mu$ are constructed as limits of the discrete measures. Closely related coupled continuum limits arise in statistical and Euclidean field theory. There, the lattice spacing tends to zero while the model parameters simultaneously approach a critical regime, and the resulting coupled limit defines a continuum random field; see Definition~2.1 in \cite{aizenman2020} and the discussion following it.

To summarize, the paper establishes three main results.

\paragraph{1. Constructive BIP.}
We first keep the total information parameter $\tau$ and the regularization strength $\kappa$ fixed and let $N\to\infty$. The finite-dimensional prior $\nu_N$ is the Gaussian measure generated by a local finite-difference regularizer, while the likelihood is allowed to be of a general convex generalized-linear-model (GLM) type.

We prove that the reconstructed priors $(\I_N)_\#\nu_N$ converge weakly on $L^2$ to a continuum Gaussian measure $\nu$. Here $\I_N:\br^N\to L^2$ is the reconstruction, or interpolation, operator that maps finite-dimensional vectors to continuum fields, and
$(\I_N)_\#$ denotes the pushforward of a finite-dimensional probability measure. We then analyze the discrete likelihood potentials and establish the weak convergence $(\I_N)_\#\mu_N\Rightarrow\mu$ on $L^2$ of the reconstructed posterior measures.  Thus, the continuum prior $\nu$ and posterior $\mu$ are indeed constructed as limits of the numerical models. This gives the local finite-difference regularization used in computation a direct continuum Bayesian interpretation.

Convergence results for priors of the type considered here are fairly standard. A closely related convergence theorem is proved in \cite{bolin2023}. Their general setting includes, as a special case, convergence to the same continuum Gaussian field as in our setting, but with a different discretization. We have not found the result in exactly the form required here and therefore provide a detailed proof. This is also useful because several ingredients and techniques from the proof are used elsewhere in the paper.

\paragraph{2. Coupled grid-refinement and small-noise limit.}
We next allow both the total information $\tau_N$ and the regularization strength $\kappa_N$ to tend to infinity with $N$, while keeping their ratio $\rho=\kappa_N/\tau_N$
fixed. This scaling retains both the likelihood and the regularizer in the limiting continuum cost functional. We prove that the reconstructed discrete MAP estimates converge in $L^2$ to the unique minimizer $u_*$ of the continuum functional. We then show that, in the coupled limit, the reconstructed posterior measures concentrate at $u_*$ and that their appropriately centered and rescaled fluctuations converge to a Gaussian measure.

More precisely, under explicit growth conditions relating $N$ and $\tau_N$, the rescaled centered posterior fluctuations converge to a Gaussian measure $\CN(\vec 0,Q^{-1})$ whose covariance is the inverse of $Q$, the Hessian of the continuum negative log-posterior at the MAP estimate. The proof uses the high-dimensional Laplace and Gaussian-approximation results of \cite{katsevich2026a,katsevich2025c}.


\paragraph{3. Matching of the two limits.}
Finally, we prove that the constructive and coupled limits are compatible. Starting from the continuum posterior constructed in the first part and then taking its small-noise, high-regularization limit, we recover the same continuum MAP $u_*$ and the same Gaussian fluctuation law $\CN(\vec 0,Q^{-1})$ as in the direct coupled limit. Thus the two procedures
\be
\text{discrete}\overset{N\to\infty}\longrightarrow\text{continuum}
\overset{\tau,\kappa\to\infty}\longrightarrow\text{small noise}
\ee
and
\be
\text{discrete with small noise}\overset{N,\tau_N,\kappa_N\to\infty}
\longrightarrow\text{continuum}
\ee
agree through Gaussian order. This matching validates the continuum posterior not only as a weak limit at fixed parameters but also as the correct asymptotic description of the underlying discrete BIPs in the concentrating regime.\\

Although the constructive continuum BIP may be easier to study than the discrete ones, it may still be difficult to analyze and therefore may not, by itself, provide a practically useful description of the discrete BIPs, which is our ultimate goal. The coupled limit, by contrast, yields the particularly simple approximation $\I_N\vec f_N\approx u_*+\tau_N^{-1/2}\xi$, where $\xi\sim\CN(\vec 0,Q^{-1})$; see \eqref{approx family}. Thus, over a whole
range of large values of $N$, the discrete BIPs are described in terms of $u_*$ and $Q$,
with the $N$-dependence entering only through $\tau_N^{-1/2}$. Once $u_*$ and $Q^{-1}$ have been computed, the approximation can be used for a variety of purposes, including quantifying posterior uncertainty as a function of $N$, without repeatedly analyzing the full
high-dimensional posterior. Consequently, an entire family of discrete BIPs can be studied with substantially less computational effort and, in favorable cases, even analytically.

To the best of our knowledge, this is the first result to analyze directly a coupled grid-refinement and small-noise limit for finite-dimensional Bayesian posteriors, identify their Gaussian fluctuations, and prove that these asymptotics agree with those obtained by first constructing the continuum posterior and then taking its small-noise limit.

To keep the main ideas and methodology transparent, we work in the simple one-dimensional setting of the unit circle, thereby avoiding additional complications arising from higher dimensions and boundary conditions. Extensions to more general domains and higher-dimensional settings appear feasible, but are left for future work.

It is also worth mentioning variational convergence methods, most notably $\Gamma$-convergence \cite{dalmaso1993}, which provide a natural framework for analyzing discrete regularized minimization problems. Such methods can justify convergence of minimizers and, in Bayesian terminology, of MAP estimates, but do not by themselves establish convergence of the full posterior measures.

The remainder of the paper is organized as follows. Section~\ref{sec:setting_assns} introduces the discrete inverse problem, the GLM-type likelihood, the finite-difference
regularization, and the standing assumptions. Section~\ref{sec:gen estims} develops general estimates for the discrete Hessians and the corresponding whitened functionals.
Section~\ref{sec:constr BIP} constructs the continuum prior and posterior at fixed $\tau$ and $\kappa$ and proves weak convergence of the reconstructed finite-dimensional posterior measures. Section~\ref{sec:small-noise-high-reg} studies the coupled grid-refinement and small-noise limit, including convergence of the MAP estimates, posterior concentration, and Gaussian fluctuations. Section~\ref{sec:Compatibility} proves that the constructive and coupled limits agree through Gaussian order. The appendices prove auxiliary results needed in the main text.

\section{Inverse problem setting and assumptions}\label{sec:setting_assns}

Let $S^1$ denote the unit circle in $\br^2$. We identify $S^1$ with the interval $[0,1]$ by gluing the endpoints. Also, we sometimes view functions on $S^1$ as 1-periodic functions on $\br$. Throughout the paper we use the notation $C:=C(S^1)$ and $L^p:=L^p(S^1)$ for any $p\in[1,\infty]$. The notation $a_N\les b_N$ means that there exists some $c>0$ independent of $N$ such that $a_N\le cb_N$ for all $N\in\N$. Also, for a vector $\vec f\in\br^N$, we denote $\|\vec f\|^2_{\ell^2}=\sum_{i=0}^{N-1} f_i^2$. Unless stated otherwise, all limits are taken as $N\to\infty$. The symbol $\Rightarrow$ denotes weak convergence of probability measures.

Consider the following equation  
\be\label{problem}
(\CK u)(t)=g(t),\quad t\in S^1.
\ee
We assume that $\CK:L^2\to C$ is a bounded linear operator.  

Assume that we have $N$ measurements, which represent local averages of some function $g\in C$ representing the background (unknown) continuum data:
\bs\label{BIP data}
g_i=&N\int_{S^1} w(N(t-t_i))g(t)\dd t,\ t_i=i/N,\ i=0,1,\dots, N-1,\\
\vec g_N:=&(g_0,\dots,g_{N-1})\in\br^N.
\es
Here $w(t)$ is the detector aperture function. 


For each $N$, the posterior is understood conditionally on the observed data vector $\vec g_N$. This is the standard viewpoint in BIPs: once the data have been observed, they are treated as fixed when the posterior distribution of the unknown is analyzed; see, for example, \cite{stuart2010, cotter2010, Dashti2017}. To obtain a consistent family of discrete BIPs as $N\to\infty$, we assume that the data vectors $\vec g_N$ are generated by \eqref{BIP data} from the same fixed background profile $g\in C$. Thus, increasing $N$ corresponds to observing the same underlying data profile at progressively finer resolution, rather than changing the underlying data from one problem to the next.

We assume that the measurements contain independent errors and allow their conditional distributions to have a scaled exponential-family form of the type underlying many generalized linear models (GLMs). Thus, an observation $v$ associated with a predicted value $r$ at location $t$ has conditional density 
\be\label{glm pointwise density}
p_q(v\mid r,t)
=\exp\left\{q[v\vartheta-B(\vartheta)]+c(v,t,q)\right\},\quad \vartheta=\vartheta(r,t),
\ee
where $\vartheta$, $B$, and $c$ are some functions; see, for example, \cite[eq. (2.4)]{mccullagh2019}. Note that the function $c$ is independent of $r$. The parameter $q>0$ determines the information scale of the measurement. Depending on the observation model, $q$ may represent a precision, inverse-dispersion, exposure, or effective sample-size parameter. This class includes Gaussian models with varying precision, gamma and other exponential-dispersion models, and Poisson models with varying exposure, among other standard observation models. Up to an additive term independent of $r$, the corresponding pointwise negative log-likelihood is $q\Phi(r,v,t)$, where
\be
\Phi(r,v,t)=B\bigl(\vartheta(r,t)\bigr)-v\vartheta(r,t).
\ee

This representation motivates our likelihood model. However, the analysis below permits a more general pointwise negative log-likelihood $\Phi(r,v,t)$ that need not arise from an exponential family. We retain, however, the information-scaling property that, up to an additive term independent of $r$, the negative log-likelihood of an observation with information parameter $q$ is $q\Phi(r,v,t)$. Following the above convention, we continue to assume that $r$ is the predicted value, $v$ is the observed value, and $t\in S^1$ is the measurement location. Thus, the observations need not be identically distributed, and $\Phi$ may depend explicitly on $t$.

Define the reconstruction (or interpolation) map in the image domain:
\be\label{recon map}
\I_N:\br^N\to L^2,\quad (\I_N \vec f)(t)=f_m,\ t\in[m/N,(m+1)/N),\ 0\le m\le N-1.
\ee
We also introduce the reconstruction map in the data domain:
\be\label{data recon map}
\I_N^d:\br^N\to L^2,\quad (\I_N^d \vec g)(t)=\sum_{i=0}^{N-1} w(N(t-t_i))g_i,
\ee
Note that the operator in \eqref{BIP data} is $(N\I_N^d)^*$. 

Throughout, $\I_N^*$ and $(\I_N^d)^*$ denote adjoints with respect to the standard Euclidean pairing on $\br^N$ and the $L^2$ pairing on the continuum space. 

\begin{assumption}\label{ass:det apert}$ $
The detector aperture function $w$ satisfies
\begin{enumerate}
\item\label{w Linf} $w\in L^\infty(\br)$ and $w$ is compactly supported. 
\item\label{w norm} $w$ is normalized: $\int_{\br}w(t)\dd t=1$. 
\end{enumerate}
\end{assumption}

\noindent
Note that the function $w$ is not periodized. Whenever $w$ is integrated against a function on $S^1$, the latter is understood through its $1$-periodic extension to $\br$, so that the aperture wraps around the endpoints of the representative interval $[0,1]$.

We discretize the integral in \eqref{problem} into a sum with $N$ terms. The negative log-posterior functional $\Psi_N^{\mathrm{full}}$ and the likelihood functional $\Psi_N$ are given by
\bs\label{discr fnls}
\Psi_N^{\mathrm{full}}(\vec f\,):=&\Psi_N(\vec f\,)+\frac{\kappa}{2N}\sum_{i=0}^{N-1}\big[f_i^2+(D_N\vec f\,)_i^2\big],\\
\Psi_N(\vec f\,):=&\frac{\tau}N\sum_{i=0}^{N-1}\Phi\big((\CK_N \vec f\,)_i,g_i,t_i\big),
\es
where $\kappa>0$ is the regularization parameter and
\bs\label{KN def}
\CK_N =& (N\I_N^d)^*\CK\I_N:\br^N\to\br^N,\\
(D_N\vec v)_i:=&N(v_{i+1}-v_i), \quad 0\le i\le N-1.
\es
Informally, if $K(t,s)$ is the kernel of $\CK$, then 
\bs\label{using kernel}
(\CK_N)_{i,m}=&N\int_{S^1}w\Big(\frac{t-t_i}{1/N}\Big)\int_{m/N}^{(m+1)/N}  K(t,s)\dd s \dd t, \quad 0\le i,m\le N-1.
\es
However, we will never use this representation in the paper.

The dependence of $\Psi_N^{\mathrm{full}}$ and $\Psi_N$ on data is omitted from notation for simplicity. We will also refer to $\Psi_N^{\mathrm{full}}$ as the cost functional. In \eqref{discr fnls} and below, we use the periodic convention $f_N=f_0$.  Assumption~\ref{ass:det apert}\eqref{w norm} and \eqref{data recon map} imply that $\vec g_N=(N\I_N^d)^*g$ is the vector of local averages of $g$.


The data vector $\vec g_N$, discrete unknown $\vec f$, discrete forward operator $\CK_N$, likelihood functional, and finite-dimensional regularizer defined above constitute a
conventional computational formulation of a finite-dimensional BIP. In particular, \eqref{discr fnls} and \eqref{KN def} are based on a standard numerical discretization of integral equations of the form \eqref{problem}.

We interpret $\tau$ as a proxy for the total information contained in all $N$ measurements. According to \eqref{discr fnls}, the information contributed by each individual measurement scales as $\tau/N$ and therefore decreases as $N$ grows. Setting $q_N=\tau/N$ in \eqref{glm pointwise density} and ignoring an additive constant independent of $\vec f$, we get the negative log-likelihood in \eqref{discr fnls}:
\be
-\sum_{i=0}^{N-1}
\log p_{q_N}\bigl(g_i\mid(\CK_N\vec f)_i,t_i\bigr)
=
\frac{\tau}{N}
\sum_{i=0}^{N-1}
\Phi\bigl((\CK_N\vec f)_i,g_i,t_i\bigr).
\ee

Let $\vec u_{*,N}=\mathrm{argmin}\Psi_N^{\mathrm{full}}(\vec f\,)$ be the MAP estimate associated with the observed data $\vec g_N$. Below we assume that $\Phi$ is convex and continuous in its first argument and bounded from below. Together with the positive definite quadratic regularization term, this implies that $\Psi_N^{\mathrm{full}}$ is coercive and strongly convex. Consequently, $\vec u_{*,N}$ exists and is unique.

Then the first variation of $\Psi_N^{\mathrm{full}}$ at $\vec u_{*,N}$ vanishes.  Equivalently, for every $\theta\in\br^N$,
\be\label{inv EL weak}
\frac{\tau}N\sum_{i=0}^{N-1}\alpha_i(\CK_{N}\vec \theta)_i
+
\frac{\kappa}{N}\sum_{i=0}^{N-1}\left[(\vec u_{*,N})_i \theta_i+
(D_N \vec u_{*,N})_i(D_N\vec \theta)_i\right]=0.
\ee
Here and below we use the following notation
\bs\label{inv alpha c}
\alpha_i:=&\pa_1\Phi(r_i,g_i,t_i),\quad
b_i:=\pa_1^2\Phi(r_i,g_i,t_i),\quad r_i:=(\CK_{N}\vec u_{*,N})_i,\quad 0\le i\le N-1,
\es
and $\pa_1$ denotes differentiation of $\Phi$ with respect to its first argument.

\begin{assumption}\label{ass:setting}$ $
\begin{enumerate}
\item\label{Kbdd} The operators $\CK$ and $\CK^*$ are bounded from $L^2$
to $C$.
\item\label{bmin} One has $\Phi(p,g(t)+h,t)\gtrsim -1$ and $\pa_1^2\Phi(p,g(t)+h,t)\ge 0$ for all $p\in\br$, $t\in S^1$, and $|h|\le h_0$ for some $h_0>0$.
\item\label{Phi lip} The function $\Phi$ is locally Lipschitz in $(p,v,t)$, and
$\partial_1^l\Phi$, $l=1,2$, are continuous in $(p,v,t)$.
\item\label{higher ders bnd} For some $p_0>0$, 
\bs\label{data higher der bnd inv}
&\sup_{N\in\N}\sup_{0\le i\le N-1}\ 
\sup_{|p|\le p_0}
\big|\pa_1^3\Phi(r_i+p,g_i,t_i)\big|\les 1.
\es
\item\label{common data} The background data profile satisfies $g\in C$.
\end{enumerate}
\end{assumption}

\noindent
To clarify, all the absorbed constants, $p_0$, and $h_0$ are independent of $N$. 

Unless stated otherwise, all results in the remainder of the paper are proved under Assumptions~\ref{ass:det apert} and~\ref{ass:setting}. These assumptions will therefore not be repeated in the statements of individual theorems and lemmas.

When we consider Gaussian fluctuations in the coupled limit in Section~\ref{ssec:Gaus scale}, see Lemmas~\ref{lem:conv of gaus meas} and \ref{lem:bip final gaussian fluct}, 
we impose an additional condition.

\begin{assumption}\label{ass:extra}$ $
Suppose at least one of the two conditions below holds:
\begin{enumerate}
\item\label{apert extra} $\sum_{i\in\BZ}w(t-i)=1$ for a.e. $t\in\br$.
\item\label{K extra} $\CK^*:L^2\to C$ is compact. 
\end{enumerate}
\end{assumption}

\section{General estimates}\label{sec:gen estims}
Denote the value of the cost functional $\Psinf$ at the MAP estimate $\vec u_{*,N}$ by
\be\label{inv classical action}
S_{N}:=
\frac{\tau}N\sum_{i=0}^{N-1}\Phi(r_i,g_i,t_i)
+\frac{\kappa}{2N}\sum_{i=0}^{N-1}\big[(\vec u_{*,N})_i^2+(D_N \vec u_{*,N})_i^2\big].
\ee
Next define the cubic-and-higher remainders
\bs\label{inv nonlinear remainders}
\Phi_{i,N}(p):=&
\Phi(r_i+p,g_i,t_i)-\Phi(r_i,g_i,t_i)
-\alpha_i p-\tfrac12 b_i p^2.
\es
Thus
\bs\label{inv remainders vanish}
\Phi_{i,N}(0)=&\Phi_{i,N}'(0)=\Phi_{i,N}''(0)=0.
\es

Writing $\vec f_N=\vec u_{*,N}+\vec \eta$, the cost functional becomes
\bs\label{Psi inv parallel}
\Psinf(\vec f_N)=&
S_{N}
+\frac12\langle \vec \eta,Q_N\vec \eta\rangle_{\ell^2}+\frac{\tau}N\sum_{i=0}^{N-1}
\Phi_{i,N}\big((\CK_{N}\vec \eta)_i\big),
\quad \vec \eta\in\br^N,
\es
where $Q_N:\br^N\to\br^N$ is the self-adjoint Hessian at $\vec u_{*,N}$, defined by
\bs\label{Q inv parallel}
\langle \vec \eta,Q_N\vec \eta\rangle_{\ell^2}
=&
\frac{\tau}N\sum_{i=0}^{N-1}
b_i(\CK_{N}\vec \eta)_i^2
+\frac{\kappa}{N}\sum_{i=0}^{N-1}\big[
\eta_i^2+(D_N\vec \eta)_i^2\big].
\es
The linear terms cancel because of \eqref{inv EL weak}.  

Define $A_N=Q_N^{-1/2}:\br^N\to\br^N$. Let $\mu_{\min}(Q_N)$ denote the smallest eigenvalue of $Q_N$.

\begin{lemma}\label{lem:QNAN BIP}
One has
\be\label{eq:A-bounds}
\mu_{\min}(Q_N)\gtrsim N^{-1},
\quad
\norm{A_N}_{\ell^2\to\ell^\infty}\les 1.
\ee
\end{lemma}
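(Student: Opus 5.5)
Since $b_i\ge 0$ by Assumption~\ref{ass:setting}\eqref{bmin} (with $h=g_i-g(t_i)$, and $|g_i-g(t_i)|\le h_0$ for large $N$ because $g\in C$ and $w$ has compact support and unit mass), the likelihood part of \eqref{Q inv parallel} is nonnegative. This gives the operator inequality $Q_N\ge \kappa P_N$, where
\[
\langle\vec\eta,P_N\vec\eta\rangle_{\ell^2}=\frac1N\sum_{i=0}^{N-1}\big[\eta_i^2+(D_N\vec\eta)_i^2\big].
\]
Dropping the difference term gives $Q_N\ge (\kappa/N)\Id$, hence $\mu_{\min}(Q_N)\gtrsim N^{-1}$. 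For the finitely many small $N$ we do not need $b_i\ge0$ pointwise: $Q_N$ is positive definite anyway, since $\Psinf$ is strongly convex, and the constant can be adjusted. In fact the quadratic regularizer alone already gives $\kappa/N$, so convexity of the likelihood is only used to discard that term.

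For the second bound, $Q_N\ge\kappa P_N$ with both operators positive definite implies $Q_N^{-1}\le \kappa^{-1}P_N^{-1}$. Hence, with $e_i$ the standard basis vectors,
\[
\|A_N\|_{\ell^2\to\ell^\infty}^2=\max_i\|A_Ne_i\|_{\ell^2}^2=\max_i\langle e_i,Q_N^{-1}e_i\rangle\le\kappa^{-1}\max_i\langle e_i,P_N^{-1}e_i\rangle .
\]
Here we used that $A_N$ is symmetric, so $\|A_N\|_{\ell^2\to\ell^\infty}=\max_i\|A_N^* e_i\|_{\ell^2}$. It therefore suffices to show that the diagonal of $P_N^{-1}$ is bounded uniformly in $N$.

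$P_N$ is circulant, so we diagonalize it by the discrete Fourier transform. Its eigenvalues are
\[
\lambda_k=\frac1N\Big(1+4N^2\sin^2(\pi k/N)\Big),\qquad k=0,\dots,N-1,
\]
and by translation invariance every diagonal entry equals
\[
(P_N^{-1})_{ii}=\frac1N\sum_{k=0}^{N-1}\lambda_k^{-1}=\sum_{k=0}^{N-1}\frac{1}{1+4N^2\sin^2(\pi k/N)}.
\]
Using $\sin(\pi k/N)\ge 2\min(k,N-k)/N$, each term is at most $(1+16\min(k,N-k)^2)^{-1}$. The sum is therefore bounded by $2\sum_{j\ge0}(1+16j^2)^{-1}<\infty$, uniformly in $N$.

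An alternative route is the discrete Sobolev embedding $\|\vec\eta\|_{\ell^\infty}^2\lesssim\langle\vec\eta,P_N\vec\eta\rangle$, the analogue of $H^1(S^1)\hookrightarrow C$. Applied to $\vec\eta=A_N\vec x$ it gives $\|A_N\vec x\|_{\ell^\infty}^2\lesssim\kappa^{-1}\langle A_N\vec x,Q_NA_N\vec x\rangle=\kappa^{-1}\|\vec x\|_{\ell^2}^2$, which avoids Fourier analysis.

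The only delicate point is justifying $b_i\ge0$. Assumption~\ref{ass:setting}\eqref{bmin} holds only for data within $h_0$ of $g(t_i)$, so we need $|g_i-g(t_i)|\le h_0$. This follows from the uniform continuity of $g$, because $w$ is compactly supported with $\int w=1$. The remaining steps are routine.
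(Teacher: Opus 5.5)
Your proof is correct. The first bound, and the justification of $b_i\ge 0$ via $|g_i-g(t_i)|\le h_0$ for large $N$, are exactly as in the paper.

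For the second bound, the paper takes the route you list as your alternative. It quotes the one-dimensional discrete Sobolev inequality $\|\vec\eta\|_{\ell^\infty}^2\lesssim\|D_N\vec\eta\|_N^2+\|\vec\eta\|_N^2$ from the literature and substitutes $\vec\eta=A_N\vec z$. Your main argument goes differently, through three steps:
\begin{itemize}
\item operator monotonicity, $Q_N^{-1}\le\kappa^{-1}P_N^{-1}$;
\item the identity $\|A_N\|_{\ell^2\to\ell^\infty}^2=\max_i(Q_N^{-1})_{ii}$;
\item an explicit evaluation of the constant diagonal of the circulant $P_N^{-1}$.
\end{itemize}
Your $P_N$ is the paper's $L_N$, whose spectrum appears in \eqref{LN evals}.

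The two routes have the same content. By Cauchy--Schwarz in the $P_N$-inner product, $\sup_{\vec\eta\ne0}\eta_i^2/\langle\vec\eta,P_N\vec\eta\rangle_{\ell^2}=(P_N^{-1})_{ii}$. So your Fourier computation is a self-contained proof of the discrete Sobolev inequality with its sharp constant.

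What each buys:
\begin{itemize}
\item Your version needs no citation and gives an explicit bound, $\|A_N\|_{\ell^2\to\ell^\infty}^2\le 2\kappa^{-1}\sum_{j\ge0}(1+16j^2)^{-1}$.
\item The paper's version is shorter and does not rely on translation invariance. It therefore carries over to non-periodic boundary conditions or variable-coefficient regularizers. There the trace formula $(P_N^{-1})_{ii}=\tfrac1N\operatorname{tr}P_N^{-1}$ fails, although the monotonicity step of your argument would still work.
\end{itemize}
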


\begin{proof}
Introduce the normalized discrete norm
\bs\label{norm norm}
\|\vec v\|_N^2:=&\frac1N\sum_{i=0}^{N-1}v_i^2,
\quad \vec v\in\br^N .
\es
The first bound in \eqref{eq:A-bounds} is immediate.

By the compact support and normalization of $w$ (Assumption~\ref{ass:det apert}), the uniform continuity of $g$ (Assumption~\ref{ass:setting}\eqref{common data}), and the definition of $g_i$ in \eqref{BIP data}, $\max_{0\leq i\leq N-1}|g_i-g(t_i)|\to0$. Hence, for all sufficiently large $N$, we may write $g_i=g(t_i)+h_{i,N}$ with $|h_{i,N}|\leq h_0$. Therefore, \eqref{inv alpha c}, Assumption~\ref{ass:setting}\eqref{bmin}, and \eqref{Q inv parallel} imply that $b_i\ge0$ and 
\be\label{QN controls R inv}
\norm{D_N\vec \eta}_N^2+\norm{\vec \eta}_N^2
\les \langle \vec \eta,Q_N\vec \eta\rangle_{\ell^2}.
\ee
Therefore $\mu_{\min}(Q_N)\gtrsim N^{-1}$. 

Further, the standard one-dimensional discrete Sobolev estimate (see, e.g., \cite{nagai2009, yamagishi2012} for more general discussions) gives
\be\label{disc Sobolev inv}
\norm{\vec \eta}_{\ell^\infty}^2
\les
\norm{D_N\vec \eta}_N^2+\norm{\vec \eta}_N^2 .
\ee
Combining \eqref{QN controls R inv} and \eqref{disc Sobolev inv}, we obtain
$\norm{\vec \eta}_{\ell^\infty}^2 \les \langle \vec \eta,Q_N\vec \eta\rangle_{\ell^2}$. 
Now put $\vec \eta=A_N\vec z=Q_N^{-1/2}\vec z$.  Since $Q_N$ is symmetric positive definite, we prove the second bound:
\be
\norm{A_N\vec z}_{\ell^\infty}^2
\les
\langle A_N\vec z,Q_NA_N\vec z\rangle_{\ell^2}
=\|\vec z\|_{\ell^2}^2.
\ee 
\end{proof}

Define the whitened functional
\be\label{inv whitened exponent}
\psi_N(\vec x):=\Psinf(\vec u_{*,N}+A_N\vec x)-S_{N},\quad \vec x\in\br^N.
\ee
By construction,
\be\label{at minimizer}
\psi_N(0)=0,\quad D\psi_N(0)=0,\quad D^2\psi_N(0)=I_{N},
\ee
where $I_N$ is the $N\times N$ identity matrix.

Recall that the operator norm of a symmetric $k$-th order tensor $A_k$ is given by \cite{zhang2012}
\be\label{T norm}
\Vert A_k\Vert_{\op}:=\sup_{\|u\|_{\ell^2}=1} \big|A_k[u^{\otimes k}]\big|.
\ee
For a function $f\in C^k(\br^N)$, the $k$-th derivative tensor is
\be
(\nabla^kf(\vec x))_{j_1\dots j_k} = \pa_{x_{j_1}}\dots\pa_{x_{j_k}}f(\vec x),\quad
1\le j_1,\dots j_k\le N,\quad \vec x\in\br^N. 
\ee

\begin{lemma}\label{lem:psi ders bdd} There exists $r_0>0$ such that
\be\label{inv whitnd ders bnd}
\sup_{\|\vec x\|_{\ell^2}\le r_0}
\big\|D^3\psi_N(\vec x)\big\|_{\op} \les 1.
\ee
\end{lemma}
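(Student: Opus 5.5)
The plan is to differentiate the whitened functional \eqref{inv whitened exponent} directly, using the representation \eqref{Psi inv parallel}. Since the quadratic part of $\psi_N$ has zero third derivative, only the remainder terms contribute. With $\vec v:=\CK_N A_N\vec u$ we get, for $\vec u\in\br^N$,
\be
D^3\psi_N(\vec x)[\vec u^{\otimes 3}]=\frac{\tau}{N}\sum_{i=0}^{N-1}\pa_1^3\Phi\big(r_i+(\CK_N A_N\vec x)_i,g_i,t_i\big)\,v_i^3 .
\ee
Since $D^3\psi_N(\vec x)$ is symmetric, \eqref{T norm} shows it suffices to bound this expression uniformly over $\|\vec u\|_{\ell^2}=1$ and $\|\vec x\|_{\ell^2}\le r_0$.

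The key step is a uniform bound $\|\CK_N\vec\eta\|_{\ell^\infty}\les\|\vec\eta\|_N$, using the normalized norm \eqref{norm norm}. To obtain it, first note $\|\I_N\vec\eta\|_{L^2}=\|\vec\eta\|_N$. By Assumption~\ref{ass:setting}\eqref{Kbdd}, $\|\CK\I_N\vec\eta\|_{C}\les\|\vec\eta\|_N$. Finally, $(N\I_N^d)^*h$ is the vector of values $N\int w(N(t-t_i))h(t)\,\dd t$, which are bounded by $\|w\|_{L^1}\|h\|_{C}$ by Assumption~\ref{ass:det apert}. Combining this with $\|\vec\eta\|_N\le\|\vec\eta\|_{\ell^\infty}$ and Lemma~\ref{lem:QNAN BIP}, we get $\|\CK_NA_N\|_{\ell^2\to\ell^\infty}\les 1$.

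Two consequences follow.
\begin{enumerate}
\item There is $C$ with $|(\CK_NA_N\vec x)_i|\le C\|\vec x\|_{\ell^2}$. Choosing $r_0=p_0/C$ keeps the arguments within the range where \eqref{data higher der bnd inv} applies, so $|\pa_1^3\Phi|\les1$ there.
\item $\|\vec v\|_{\ell^\infty}\les1$ for unit $\vec u$.
\end{enumerate}

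Next I need a bound on the average of $v_i^2$ that carries a factor $\kappa^{-1}$ to compensate $\tau$. From \eqref{Q inv parallel}, $\frac{\kappa}{N}\sum_i\eta_i^2\le\langle\vec\eta,Q_N\vec\eta\rangle_{\ell^2}$, so with $\vec\eta=A_N\vec u$ we have $\|A_N\vec u\|_N^2\le\kappa^{-1}$. Hence
\be
\frac1N\sum_i v_i^2\le\|\vec v\|_{\ell^\infty}^2\les\|A_N\vec u\|_N^2\le\kappa^{-1}.
\ee
Putting these together,
\be
|D^3\psi_N(\vec x)[\vec u^{\otimes3}]|\les\tau\,\|\vec v\|_{\ell^\infty}\frac1N\sum_i v_i^2\les \tau/\kappa .
\ee
This is $O(1)$ both for fixed $\tau,\kappa$ and in the coupled regime where $\kappa_N/\tau_N=\rho$ is fixed. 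It is also worth tracking that $r_0$ depends only on $p_0$ and the constant in Lemma~\ref{lem:QNAN BIP}.

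The main obstacle is making sure the $\ell^\infty$ bound on $\CK_NA_N$ is uniform in $N$, and, in the coupled limit, uniform in $\tau_N,\kappa_N$. Lemma~\ref{lem:QNAN BIP} was proved with constants that may depend on $\kappa$. In the coupled regime I would re-run that argument: the Hessian bound \eqref{QN controls R inv} now carries a factor $\kappa$, which gives $\|A_N\|_{\ell^2\to\ell^\infty}\les\kappa^{-1/2}$. This only improves the estimate above, to $\tau\kappa^{-3/2}$, and in particular lets $r_0$ be taken uniformly.
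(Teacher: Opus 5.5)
Your proof is correct and follows essentially the same route as the paper. It uses the same formula $D^3\psi_N(\vec x)[\vec u^{\otimes 3}]=\frac{\tau}{N}\sum_i\pa_1^3\Phi(\cdot)\,(\CK_NA_N\vec u)_i^3$, the same chain $\|\CK_NA_N\vec u\|_{\ell^\infty}\les\|A_N\vec u\|_N\le\|A_N\vec u\|_{\ell^\infty}\les\|\vec u\|_{\ell^2}$ via Lemma~\ref{lem:QNAN BIP}, and the same choice of $r_0$ to keep the arguments within $p_0$; your extra tracking of $\tau/\kappa$ is a valid refinement the paper does not need, since it treats $\tau,\kappa$ as fixed and in the coupled limit applies the lemma with $\tau=1$, $\kappa=\rho$.
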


Thus the third derivative tensor of $\psi_N$ is uniformly bounded.

\begin{proof}
Let $\vec \eta=A_N\vec x$. Then, for $l=3$,
\bs\label{inv Dl exact}
\pa_{x_{j_1}}\dots\pa_{x_{j_l}}\psi_N(\vec x)
=&
\frac{\tau}N\sum_{i=0}^{N-1}
\pa_1^l\Phi\left(r_i+(\CK_{N}A_N\vec x)_i,\, g_i,\, t_i\right)
\prod_{r=1}^l (\CK_{N}A_N)_{i,j_r}.
\es
This implies
\bs\label{case r0 BIP}
D^l \psi_N(\vec x)[\vec z^{\otimes l}]=&\sum_{j_1,\dots,j_l}\big[\pa_{x_{j_1}}\dots \pa_{x_{j_l}}\psi_N(\vec x)\big]z_{j_1}\dots z_{j_l}\\
=& \frac{\tau}N\sum_{i}\pa_1^l\Phi(\cdot)
\Big[\sum_{j_1,\dots,j_l}(\CK_{N}A_N)_{i,j_1}\dots (\CK_{N}A_N)_{i,j_l}z_{j_1}\dots z_{j_l}\Big]\\
=&\frac{\tau}N\sum_{i}\pa_1^l\Phi(\cdot)\mu_i^l,\quad \vec \mu:=\CK_{N} A_N \vec z.
\es
Here $\vec z\in\br^N$ is a tangent vector to $\br^N$ at the point $\vec x$.

By Lemma~\ref{lem:QNAN BIP}, $\norm{A_N}_{\ell^2\to\ell^\infty}\les 1$. Therefore, by Assumption~\ref{ass:det apert}\eqref{w Linf}, for every $\vec v\in\br^N$,
\bs\label{CKAN linf}
\|\CK_{N}A_N\vec v\|_{\ell^\infty} \les \|\CK \I_N (A_N\vec v)\|_{C}
\les \|\I_N (A_N\vec v)\|_{L^2}\les \|A_N\vec v\|_N \le \|A_N\vec v\|_{\ell^\infty}\les \|\vec v\|_{\ell^2}.
\es
Moreover, if $\|\vec x\|_{\ell^2}\le r_0$ and $r_0>0$ is sufficiently small, then $\|\CK_{N}A_N\vec x\|_{\ell^\infty}\le p_0$ uniformly in $N$. Hence the derivative bound \eqref{data higher der bnd inv} applies in \eqref{case r0 BIP}.

Now suppose $\|\vec z\|_{\ell^2}\le1$. Using the derivative bound \eqref{data higher der bnd inv} and \eqref{CKAN linf} in \eqref{case r0 BIP}, we obtain
\bs
\big|D^l\psi_N(\vec x)[\vec z^{\otimes l}]\big|
\les & \frac1{N}\sum_{i=0}^{N-1}
\big|(\CK_{N}A_N\vec z)_i^l\big|
\les \frac1N\sum_{i=0}^{N-1}1 \les 1.
\es
Taking the supremum over $\|\vec z\|_{\ell^2}\le1$ proves \eqref{inv whitnd ders bnd}.
\end{proof}

\section{Constructive BIP at fixed information and \protect\\ regularization strengths}\label{sec:constr BIP}

The functional $\Psinf$ in \eqref{discr fnls} determines the posterior measure $\mu_N(\dd\vec f)\propto \exp\{-\Psinf(\vec f)\}\dd\vec f$ on $\br^N$. We embed this posterior into the continuum state space $L^2$ using the map $\I_N:\br^N\to L^2$; see \eqref{recon map}. Our goal is to prove that the pushforwards of $\mu_N$ converge to a continuum posterior.

\begin{theorem}\label{thm:poster conv} There exists a probability measure $\mu$ on $L^2$ such that $(\I_N)_\#\mu_N\Rightarrow \mu$ in $L^2$.
\end{theorem}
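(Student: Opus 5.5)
Write $\mu_N=Z_N^{-1}e^{-\Psi_N}\nu_N$, where $\nu_N$ is the centered Gaussian on $\br^N$ with precision $\frac{\kappa}{N}(I+D_N^*D_N)$. The plan has three steps: prove convergence of the reconstructed priors, pass the likelihood to the limit, and combine the two through a reweighting argument.

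\textbf{Step 1: priors.} We first show $\tilde\nu_N:=(\I_N)_\#\nu_N\Rightarrow\nu$, where $\nu=\CN(0,\kappa^{-1}(I-\partial_t^2)^{-1})$ on $L^2$.
\begin{itemize}
\item[(a)] The prior precision is diagonalized by the discrete Fourier basis, with eigenvalues $\frac{\kappa}{N}\bigl(1+4N^2\sin^2(\pi k/N)\bigr)$.
\item[(b)] Hence $\E\|\I_N\vec f\|_{L^2}^2=\frac1N\E\|\vec f\|_{\ell^2}^2=\kappa^{-1}\sum_k \bigl(1+4N^2\sin^2(\pi k/N)\bigr)^{-1}$. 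This is bounded uniformly in $N$, because $4N^2\sin^2(\pi k/N)\gtrsim k^2$ for $|k|\le N/2$.
\item[(c)] The same computation bounds the discrete $H^1$ seminorm $\E\|D_N\vec f\|_N^2$ in a weaker sense. More usefully, it bounds $\E\|\I_N\vec f\|_{H^s}^2$ for $s<1/2$: piecewise constants lie in $H^s$ for $s<1/2$, and their $H^s$ norm is controlled by the discrete $H^s$ norm.
\item[(d)] Tightness in $L^2$ then follows from the compact embedding $H^s\hookrightarrow L^2$ and Chebyshev's inequality.
\item[(e)] To identify the limit, we show convergence of characteristic functionals. For $\varphi\in L^2$ we have $\langle\I_N\vec f,\varphi\rangle=\langle\vec f,\I_N^*\varphi\rangle$, which is Gaussian with variance $\langle \I_N^*\varphi, C_N\I_N^*\varphi\rangle$. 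We verify this converges to $\kappa^{-1}\langle\varphi,(I-\partial_t^2)^{-1}\varphi\rangle$ by comparing Fourier multipliers on smooth $\varphi$ and then using density together with the uniform bound from (b).
\end{itemize}

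\textbf{Step 2: likelihood.} Define the continuum potential $\Psi(u):=\tau\int_{S^1}\Phi((\CK u)(t),g(t),t)\,\dd t$. We claim that if $u_N:=\I_N\vec f_N\to u$ in $L^2$, then $\Psi_N(\vec f_N)\to\Psi(u)$.
\begin{itemize}
\item[(a)] Rewrite $(\CK_N\vec f)_i=N\int w(N(t-t_i))(\CK u_N)(t)\,\dd t$.
\item[(b)] By Assumption~\ref{ass:setting}\eqref{Kbdd}, $\CK u_N\to\CK u$ in $C$. Since $w$ has compact support and unit mass, the local averages satisfy $\max_i|(\CK_N\vec f_N)_i-(\CK u)(t_i)|\to0$; likewise $\max_i|g_i-g(t_i)|\to0$.
\item[(c)] Local Lipschitz continuity of $\Phi$ (Assumption~\ref{ass:setting}\eqref{Phi lip}) on the bounded range involved, together with Riemann-sum convergence for the continuous function $t\mapsto\Phi((\CK u)(t),g(t),t)$, gives the claim.
\end{itemize}
The same estimates give a uniform lower bound $\Psi_N\ge -c\tau$ by Assumption~\ref{ass:setting}\eqref{bmin}. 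They also give local uniform boundedness: $|\Psi_N(\vec f)|\le C(R)$ whenever $\|\I_N\vec f\|_{L^2}\le R$.

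\textbf{Step 3: combining.} Set $\mu:=Z^{-1}e^{-\Psi}\nu$ and write $\Psi_N=\tilde\Psi_N\circ\I_N$. (This is legitimate, since $\I_N$ is injective and $\Psi_N$ depends on $\vec f$ only through $\I_N\vec f$ via $\CK\I_N$.) Then for bounded continuous $F$,
\[
\int F\,\dd(\I_N)_\#\mu_N=\frac{\int F e^{-\tilde\Psi_N}\,\dd\tilde\nu_N}{\int e^{-\tilde\Psi_N}\,\dd\tilde\nu_N}.
\]
Convergence of numerator and denominator follows from a generalized continuous mapping theorem. By Skorokhod representation, take $U_N\to U$ a.s. in $L^2$ with $U_N\sim\tilde\nu_N$ and $U_N$ in the range of $\I_N$. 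Step~2 gives $\tilde\Psi_N(U_N)\to\Psi(U)$ a.s. The integrands are bounded by $\|F\|_\infty e^{c\tau}$, so dominated convergence applies. Finally, $Z>0$ because $\Psi$ is finite everywhere.

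\textbf{Main obstacle.} The delicate points are the tightness and covariance identification in Step~1, where the discrete $H^s$ bound must be checked carefully for the piecewise-constant interpolant. A second point needing care is Step~2's requirement that convergence hold along arbitrary sequences $U_N$ in the range of $\I_N$, rather than only at fixed $u$.
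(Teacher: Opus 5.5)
Your proposal is correct. Steps~1 and~2 are essentially the paper's argument:
\begin{itemize}
\item The paper's tightness proof bounds $\E\|T^{-1}x\|_{L^2}^2$ with $Te_k=(1+|k|)^{-\alpha}e_k$, $\alpha<1/2$. This is exactly your expected $H^s$ bound for the piecewise-constant interpolant, and the aliased modes $k=r+mN$, $m\neq 0$, are summable precisely because $\alpha<1/2$.
\item The limit is identified as you propose: Fourier multipliers on smooth test functions, then density plus a uniform operator-norm bound, then characteristic functionals.
\item Your Step~2 is the paper's Lemma~\ref{lem:PsiNPsi}.
\end{itemize}
Drop the remark in 1(c) about the discrete $H^1$ seminorm. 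In fact $\E\|D_N\vec f\|_N^2$ grows like $N$, so there is no uniform $H^1$ control at all, which is why you must go down to $s<1/2$.

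The genuine difference is Step~3. The paper does not use Skorokhod representation. Instead, it upgrades Step~2 to uniform convergence on compact sets,
\[
\sup_{\I_N\vec f_N\in K}\bigl|\Psi_N(\vec f_N)-\Psi(\I_N\vec f_N)\bigr|\to0,\qquad K\subset L^2 \text{ compact}.
\]
It then inserts the intermediate integral $\int e^{-\Psi(\I_N\vec f_N)}\,\dd\nu_N$, which converges by the weak convergence of the priors, since $e^{-\Psi}$ and $Fe^{-\Psi}$ are bounded and continuous on $L^2$. The remaining difference is controlled by splitting $\br^N$ into the preimage of a compact set of $\widetilde\nu_N$-mass at least $1-\varepsilon$ (from tightness) and its complement, where the uniform bound $e^{c}$ is used.

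Your Skorokhod/dominated-convergence argument does the same job in one step. It needs only the sequential convergence of Step~2 along the almost surely convergent representatives $U_N\in\I_N(\br^N)$, and it avoids the $\varepsilon$-bookkeeping. The paper's route is more elementary (no representation theorem) and gives an explicit error bound in terms of $\delta_{N,\varepsilon}$ and $\varepsilon$, at the cost of proving the compact-uniform convergence statement. Both routes rest on the same two ingredients: convergence of $\Psi_N$ along arbitrary reconstructed sequences, and the uniform lower bound on $\Psi_N$ and $\Psi$.
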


The theorem is proved in Sections~\ref{ssec:conv of priors}--\ref{ssec:conv posters}. In Section~\ref{ssec:relation-discretization-invariance}, we compare our result with the most closely related existing approaches.

\subsection{Convergence of priors}\label{ssec:conv of priors}
By \eqref{discr fnls}, we consider the following quadratic regularizer
\bs\label{LN prep}
\frac{\kappa}2\langle \vec f,L_N \vec f\rangle_{\ell^2}
= \frac{\kappa}{2N} \sum_{i=0}^{N-1}\big[f_i^2+(D_N \vec f)_i^2\big],
\quad \vec f\in\br^N, 
\es
so
\be\label{LN def}
(L_N \vec f)_i=\tfrac1N[f_i-N^2(f_{i+1}-2f_i+f_{i-1})],\quad 0\le i\le N-1.
\ee

Define the finite-dimensional and continuum Gaussian priors 
\bs\label{two priors}
\nu_N=\CN(\vec 0,(\kappa L_N)^{-1}),\quad
\nu=\CN(\vec 0,(\kappa\CL)^{-1}),\ \CL=-\pa_t^2+\Id,
\es
where $\Id$ is the identity operator.

\begin{lemma}\label{lem:conv of priors}
The interpolated priors converge weakly,
$\widetilde\nu_N:=(\I_N)_\#\nu_N \Rightarrow \nu$ on $L^2$.
\end{lemma}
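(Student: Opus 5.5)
Both $\widetilde\nu_N$ and $\nu$ are centered Gaussian measures on the separable Hilbert space $L^2$. I will argue via the Fourier basis, using the fact that $L_N$ is circulant. The covariance of $\widetilde\nu_N$ is $C_N:=\kappa^{-1}\I_N L_N^{-1}\I_N^*$, and the covariance of $\nu$ is $C:=\kappa^{-1}\CL^{-1}$, which is trace class on $L^2(S^1)$ with eigenvalues $\kappa^{-1}(1+4\pi^2k^2)^{-1}$ on $e_k(t)=e^{2\pi i k t}$.

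For centered Gaussians on a Hilbert space, the following two facts together imply $\widetilde\nu_N\Rightarrow\nu$:
\begin{enumerate}
\item[(a)] $\langle C_N\varphi,\varphi\rangle\to\langle C\varphi,\varphi\rangle$ for every $\varphi$ in a dense set, or equivalently $C_N\to C$ weakly. This gives convergence of characteristic functionals on a dense set, and hence convergence of finite-dimensional projections.
\item[(b)] Tightness. It suffices to show $\sup_N \operatorname{tr}C_N<\infty$ and that the tails are uniformly small, i.e.\ $\sup_N\sum_{|k|>M}\langle C_N e_k,e_k\rangle\to0$ as $M\to\infty$. By the standard compactness criterion for measures on Hilbert spaces (uniformly small second moments outside finite-dimensional subspaces), this gives tightness.
\end{enumerate}
Alternatively, one can prove $\operatorname{tr}|C_N-C|\to0$ directly, which would give convergence of the Gaussians even in Wasserstein-2. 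I will aim for the trace bound on the tails, which is what the later sections plausibly reuse.

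\textbf{Step 1: explicit diagonalization.} $L_N$ is diagonal in the discrete Fourier vectors $(\vec\phi_k)_m=e^{2\pi i km/N}$, $k=0,\dots,N-1$, with eigenvalues
\[
\lambda_{N,k}=N^{-1}\bigl(1+4N^2\sin^2(\pi k/N)\bigr).
\]
Next I compute $\I_N^*e_j$. It has entries $\int_{m/N}^{(m+1)/N}e^{-2\pi i jt}\dd t$, which equals $N^{-1}\mathrm{sinc}$-type factor $\sigma_N(j)$ times $e^{-2\pi i jm/N}$, where $|\sigma_N(j)|=|\sin(\pi j/N)|/(\pi|j|/N)$. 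So $\I_N^*e_j$ is a multiple of the discrete Fourier vector with aliased index $k\equiv -j \pmod N$. Consequently
\[
\langle C_Ne_j,e_j\rangle=\kappa^{-1}N^{-1}|\sigma_N(j)|^2\lambda_{N,j}^{-1}
=\kappa^{-1}\,\frac{\sin^2(\pi j/N)}{(\pi j/N)^2}\cdot\frac{1}{1+4N^2\sin^2(\pi j/N)}.
\]

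\textbf{Step 2: the two conditions.} For fixed $j$, the expression in Step 1 tends to $\kappa^{-1}(1+4\pi^2j^2)^{-1}$, since $\sin x\sim x$. Weak convergence of $C_N$ on the span of finitely many $e_j$ also requires the off-diagonal terms $\langle C_Ne_j,e_l\rangle$ to vanish in the limit. These are nonzero only for $j\equiv l\pmod N$, so for fixed $j\ne l$ they are zero once $N>|j-l|$. Uniform boundedness of $\|C_N\|$ then extends this to all of $L^2$.

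For tightness, I use $\sin^2(\pi j/N)/(\pi j/N)^2\le1$ and split into two ranges:
\begin{enumerate}
\item[(i)] For $|j|\le N/2$, use $|\sin x|\ge 2|x|/\pi$ on $|x|\le\pi/2$. The term is then $\lesssim(1+j^2)^{-1}$.
\item[(ii)] For $|j|>N/2$, write $j=k+nN$ with $|k|\le N/2$. The term is bounded by $N^2/(\pi^2j^2)\cdot(1+16k^2)^{-1}$. Summing over $n\ne0$ gives $\lesssim(1+k^2)^{-1}\sum_{n\ne0}n^{-2}$. This contributes to the tail beyond $M$ only when $N/2\gtrsim$ something, and is uniformly summable in $k$, with $\sum_{|j|>M}$ bounded by $\sum_{|k|>M}(1+k^2)^{-1}+O(1/M)$. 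The bound uses $|j|\ge N/2>M$ whenever a term with $n\ne0$ lies in the tail region, so its $k$-sum is $\lesssim 1/N\lesssim 1/M$.
\end{enumerate}

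\textbf{Main obstacle.} I expect the main obstacle to be the bookkeeping of this aliasing: controlling the high-frequency content of the piecewise-constant interpolant uniformly in $N$. The plan is to handle it exactly as above, by factoring out the sinc decay $N^2/j^2$ of the step function.
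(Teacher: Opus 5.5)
Your overall plan matches the paper's Appendix~\ref{sec:conv prior}: diagonalize $L_N$, note that $\I_N^*e_j$ is a discrete Fourier vector with aliased index, prove tightness, prove convergence of covariance forms, and identify the Gaussian limit. Your formula for $\langle C_Ne_j,e_j\rangle$ and the off-diagonal argument in (a) are correct.

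The gap is in (ii), which is the step that actually carries tightness. Once you replace the numerator $\sin^2(\pi j/N)$ by $1$, the bound $\frac{N^2}{\pi^2j^2}(1+16k^2)^{-1}$ no longer sees that aliased modes are suppressed. It is not small in the tail uniformly in $N$. For $j=N$ (that is, $k=0$, $n=1$) it equals $\pi^{-2}$, and $j=N$ lies in $\{|j|>M\}$ for every $N>M$, whereas the true value $\langle C_Ne_N,e_N\rangle$ is $0$. Summed over $n\neq0$ and $|k|\le N/2$, your bound is of order one, not $O(1/N)$. Also, the assertion that $|j|\ge N/2>M$ whenever an aliased term lies in the tail fails for $N\le 2M$, where every term with $|j|>M$ is aliased. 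So, as written, (ii) proves only $\sup_N\operatorname{tr}C_N<\infty$, which does not give tightness on the infinite-dimensional space $L^2$.

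The missing point is that the sinc factor vanishes at the aliasing frequencies. You must keep $\sin^2(\pi j/N)=\sin^2(\pi k/N)$ and pair it with the denominator. Since $\sin^2x/(1+4N^2\sin^2x)\le 1/(4N^2)$, your Step~1 formula gives, for all $N$ and all $j\neq0$,
\[
\langle C_Ne_j,e_j\rangle\le \frac{1}{4\pi^2\kappa\, j^2}.
\]
Hence $\langle C_Ne_j,e_j\rangle\le 2\langle Ce_j,e_j\rangle$ for all $j$ and $N$, and $\sup_N\sum_{|j|>M}\langle C_Ne_j,e_j\rangle\lesssim 1/M$. This repairs (b).

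It is in fact a little cleaner than the paper's route. The paper verifies Bogachev's criterion through a uniform bound on $\int\|T^{-1}x\|_{L^2}^2\,\widetilde\nu_N(\dd x)$ with $Te_k=(1+|k|)^{-\al}e_k$. It controls the aliased ($m\neq0$) terms precisely by keeping $\sin^2(\pi r/N)\le(\pi r/N)^2$.

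With this fix, the rest of your argument goes through as in the paper. That includes the off-diagonal terms vanishing for $N>|j-l|$, the bound $\|C_N\|\le\kappa^{-1}$ from $\mu_{\min}(L_N)=1/N$ and $\|\I_N\|=N^{-1/2}$, and the identification of subsequential limits through characteristic functionals.
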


The lemma is proved in Appendix~\ref{sec:conv prior}.

\subsection{Convergence of likelihood potentials}\label{ssec:conv lklhd}
Define the continuum likelihood similarly to the discrete one in \eqref{discr fnls}
\bs\label{Psi def}
\Psi(f)=\tau\int_{S^1}\Phi\bigl((\CK f)(t),g(t),t\bigr)\dd t,\quad f\in L^2.
\es
Recall that $\I_N^d$ is the reconstruction map in the data domain defined in \eqref{data recon map}. Pick any $f\in L^2$. 

\begin{lemma}\label{lem:PsiNPsi} Let $\vec f_N\in\br^N$ be any sequence such that $\I_N\vec f_N\to f$ in $L^2$. One has $\Psi_N(\vec f_N)\to \Psi(f)$.
\end{lemma}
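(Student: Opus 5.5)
The strategy is to write $\Psi_N(\vec f_N)$ as an integral over $S^1$ of a piecewise-constant function and to show that this function converges uniformly to $\Phi((\CK f)(t),g(t),t)$. Define
\[
r_{i,N}:=(\CK_N\vec f_N)_i=N\int_{S^1}w(N(t-t_i))\,(\CK\I_N\vec f_N)(t)\,\dd t .
\]
Then
\[
\Psi_N(\vec f_N)=\tau\int_{S^1}\Phi\bigl(R_N(t),G_N(t),T_N(t)\bigr)\dd t ,
\]
where $R_N$, $G_N$, $T_N$ are the step functions equal to $r_{i,N}$, $g_i$, $t_i$ on $[t_i,t_{i+1})$. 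The task therefore reduces to three uniform convergences:
\[
\max_i|r_{i,N}-(\CK f)(t_i)|\to0,\qquad \max_i|g_i-g(t_i)|\to0,\qquad \max_i|t_i-t|\le 1/N \text{ on each cell}.
\]
Combined with the continuity of $\CK f$ and $g$, these give $R_N\to\CK f$, $G_N\to g$ and $T_N(t)\to t$ uniformly on $S^1$.

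The second and third facts are already available: the convergence of the $g_i$ was established in the proof of Lemma~\ref{lem:QNAN BIP}, and the bound on $T_N$ is trivial. For the first, split $r_{i,N}-(\CK f)(t_i)$ into two terms:
\[
r_{i,N}-(\CK f)(t_i)=N\int w(N(t-t_i))\,\bigl[\CK(\I_N\vec f_N-f)\bigr](t)\,\dd t+\Bigl[N\int w(N(t-t_i))\,(\CK f)(t)\,\dd t-(\CK f)(t_i)\Bigr].
\]
The first term is bounded by
\[
\|w\|_{L^1}\,\|\CK\|_{L^2\to C}\,\|\I_N\vec f_N-f\|_{L^2}\to0,
\]
using Assumption~\ref{ass:setting}\eqref{Kbdd}. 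The second term tends to zero uniformly in $i$, by the same argument as for $g_i$: $\CK f\in C$ is uniformly continuous, $w$ has compact support with unit integral, and the mass of $N w(N(\cdot-t_i))$ concentrates in a window of width $O(1/N)$ around $t_i$.

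Uniform convergence implies that $R_N$, $G_N$, $T_N$ take values in a fixed compact set $K\subset\br\times\br\times S^1$ for all large $N$. Indeed, $\|\I_N\vec f_N\|_{L^2}$ is bounded, so $\|R_N\|_\infty\lesssim1$, and the data values stay within $\|g\|_C+h_0$. Since $\Phi$ is locally Lipschitz (Assumption~\ref{ass:setting}\eqref{Phi lip}), it is uniformly continuous on $K$. Hence the integrand converges uniformly on $S^1$, and integrating over the unit circle gives $\Psi_N(\vec f_N)\to\Psi(f)$. One minor point is that the integrand $t\mapsto\Phi((\CK f)(t),g(t),t)$ is continuous, so $\Psi(f)$ is finite and well defined.

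The main obstacle is the uniform estimate on $r_{i,N}$. Its resolution relies on two properties of the aperture: the kernel $Nw(N(\cdot-t_i))$ has $L^1$ norm uniformly bounded by $\|w\|_{L^1}$, and its support shrinks. This lets the $L^2\to C$ boundedness of $\CK$ convert $L^2$ convergence of $\I_N\vec f_N$ into sup-norm control of $R_N$. The periodic wrap-around of the aperture, the convention that $\CK f$ is extended $1$-periodically, needs care, but it poses no real difficulty.
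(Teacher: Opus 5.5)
Your proposal is correct and follows essentially the same route as the paper. The paper also uses the convergence $\CK\I_N\vec f_N\to\CK f$ in $C$, the fact that $\CK_N\vec f_N$ and $\vec g_N$ are local averages of continuous functions, and the Lipschitz continuity of $\Phi$ on a compact set. Your version spells out what the paper leaves implicit: the explicit two-term split of $r_{i,N}-(\CK f)(t_i)$ and the step-function representation of the Riemann sum.
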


Clearly, one such choice is $\vec f_N=(N\I_N)^*f$. In this case convergence in $L^2$ follows by noting that $P_N:=\I_N(N\I_N)^*$ is the $L^2$-orthogonal projection onto the space of functions constant on the intervals $(i/N,(i+1)/N)$, $i=0,1,\dots, N-1$.

\begin{proof}
By Assumption~\ref{ass:setting}\eqref{Kbdd}, $\CK:L^2\to C$ is continuous. Therefore, $\CK(\I_N \vec f_N) \to \CK f$ in $C$. By the properties of $w$, $\CK_N\vec f_N$ and $\vec g_N$ are the vectors of local averages of $\CK (\I_N \vec f_N)$ and $g$, respectively. By Assumption~\ref{ass:setting}\eqref{Phi lip}, $\Phi$ is Lipschitz continuous on the relevant compact set. Therefore
\bs\label{PsiNPhi}
\Psi_N(\vec f_N)=\frac{\tau}N\sum_{i=0}^{N-1} \Phi\bigl((\CK_N \vec f_N)_i,g_i,t_i\bigr)\to&\,\tau\int_{S^1}\Phi\bigl((\CK f)(t),g(t),t\bigr)\dd t=\Psi(f).
\es
\end{proof}

We now prove another result that is used in the following section. Pick any compact set $K\subset L^2$. By the continuity of $\CK$, the set $\CK(K)\subset C$ is compact. Hence  
\be\label{proj C error}
\sup_{h\in \CK(K)}\|h-\I_N(N\I_N^d)^* h\|_{L^\infty} \to0.
\ee
Pick any sequence $\I_N \vec f_N\in K$. By \eqref{Psi def} we have
\bs\label{Psi vecf}
\Psi(\I_N \vec f_N)
=&\tau\int_{S^1}\Phi\bigl((\CK \I_N\vec f_N)(t),g(t),t\bigr)\dd t.
\es
Using \eqref{proj C error}, where $h$ has the meaning of $h=\CK\I_N \vec f_N$,
the same argument as in the proof of Lemma~\ref{lem:PsiNPsi} now gives
\be\label{Psi conv bnd}
\sup_{\vec f_N:\, \I_N \vec f_N\in K}\big|\Psi_N(\vec f_N)-\Psi(\I_N \vec f_N)\big|\to0.
\ee

%

\subsection{Convergence of normalizing constants}
Recall that the priors $\nu_N$ and $\nu$ are defined in \eqref{two priors}. Define the normalizing constants
\bs\label{two Zs}
Z_N=&\int_{\br^N}\exp\{-\Psi_N(\vec f_N)\}\dd\nu_N(\vec f_N),\quad
Z=\int_{L^2}\exp\{-\Psi(f)\}\dd\nu(f).
\es

\begin{lemma}\label{lem:ZNtoZ} One has $Z_N\to Z$ and $0<Z<\infty$.
\end{lemma}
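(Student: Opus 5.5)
The plan is to rewrite both normalizing constants as integrals against the interpolated priors on $L^2$ and then combine the weak convergence of Lemma~\ref{lem:conv of priors} with the uniform-on-compacts estimate \eqref{Psi conv bnd}. Since $\I_N$ is injective, $\Psi_N\circ\I_N^{-1}$ is well defined on the range of $\I_N$, and
\[
Z_N=\int_{L^2}\exp\{-\Psi_N(\I_N^{-1}h)\}\,\dd\widetilde\nu_N(h),\qquad \widetilde\nu_N=(\I_N)_\#\nu_N .
\]
I split
\[
Z_N-Z=\int\Big[e^{-\Psi_N(\vec f)}-e^{-\Psi(\I_N\vec f)}\Big]\dd\nu_N(\vec f)
+\Big[\int e^{-\Psi}\,\dd\widetilde\nu_N-\int e^{-\Psi}\,\dd\nu\Big],
\]
and show that each bracket tends to zero.

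\emph{Step 1: uniform bounds.} As in the proof of Lemma~\ref{lem:QNAN BIP}, for $N$ large we can write $g_i=g(t_i)+h_{i,N}$ with $|h_{i,N}|\le h_0$. Assumption~\ref{ass:setting}\eqref{bmin} then gives $\Phi(p,g_i,t_i)\gtrsim-1$ uniformly in $p$, $i$ and $N$. Hence $\Psi_N\ge -c\tau$ and $0<e^{-\Psi_N}\le e^{c\tau}$. In the same way (take $h=0$), $\Psi\ge -c\tau$ and $0<e^{-\Psi}\le e^{c\tau}$.

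\emph{Step 2: the second bracket.} The functional $\Psi$ is continuous on $L^2$. Indeed, $\CK:L^2\to C$ is continuous, and $\Phi$ is locally Lipschitz (Assumption~\ref{ass:setting}\eqref{Phi lip}), hence uniformly continuous on the compact range of arguments that arises. This is the argument of Lemma~\ref{lem:PsiNPsi}. So $e^{-\Psi}$ is bounded and continuous on $L^2$, and Lemma~\ref{lem:conv of priors} gives $\int e^{-\Psi}\dd\widetilde\nu_N\to\int e^{-\Psi}\dd\nu=Z$.

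\emph{Step 3: the first bracket.} This is the main point, because $\Psi_N\circ\I_N^{-1}$ changes with $N$ and is not a fixed test function. The sequence $\widetilde\nu_N$ converges weakly on the Polish space $L^2$, so by Prokhorov it is tight. Given $\e>0$, choose a compact $K\subset L^2$ with $\widetilde\nu_N(L^2\setminus K)<\e$ for all $N$.

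On $\{\I_N\vec f\in K\}$, estimate \eqref{Psi conv bnd} gives $\sup|\Psi_N(\vec f)-\Psi(\I_N\vec f)|\to0$. Since $e^{-x}$ is uniformly Lipschitz on $[-c\tau,\infty)$, the integrand of the first bracket tends to zero uniformly on this set. Off this set, the integrand is bounded by $2e^{c\tau}$, so it contributes at most $2e^{c\tau}\e$. Therefore $\limsup|Z_N-Z|\le 2e^{c\tau}\e$ for every $\e>0$, and $Z_N\to Z$.

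\emph{Step 4: $0<Z<\infty$.} The bound $Z\le e^{c\tau}$ follows from Step 1. For positivity, $\Psi(f)$ is finite for every $f\in L^2$, so $e^{-\Psi}>0$ pointwise. Since $\nu$ is a probability measure, $Z=\int e^{-\Psi}\dd\nu>0$.
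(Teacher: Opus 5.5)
Your proof is correct and follows the paper's argument step for step: the same splitting into \eqref{Zconv I}, which is weak convergence of the interpolated priors tested against the bounded continuous $e^{-\Psi}$, and \eqref{Zconv II}, which uses tightness, the uniform-on-compacts estimate \eqref{Psi conv bnd} and the Lipschitz bound for $e^{-x}$ on $[-c,\infty)$, together with the same argument that $0<Z<\infty$. The only differences are cosmetic: you get tightness of $\widetilde\nu_N$ from the converse Prokhorov theorem rather than from the direct estimate in Appendix~\ref{ssec:Gaussian tightness}, and you note explicitly that the lower bound on $\Psi_N$ needs $N$ large enough that $|g_i-g(t_i)|\le h_0$.
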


By the definition of push-forward:
\be\label{Psiaux}
\int_{L^2}\exp\{-\Psi(f)\}\dd(\I_N)_\#\nu_N(f)
=\int_{\br^N}\exp\{-\Psi(\I_N\vec f_N)\}\dd \nu_N(\vec f_N).
\ee
By Assumption~\ref{ass:setting}\eqref{bmin}, 
\bs\label{unif int}
&\Psi_N(\vec f_N)\ge -c,\quad 0<\exp\{-\Psi_N(\vec f_N)\}\le e^c,\quad \vec f_N\in\br^N,\\ 
&\Psi(f)\ge -c,\quad 0< \exp\{-\Psi(f)\}\le e^c,\quad f\in L^2,
\es
for some $c>0$. Since $f\to\exp\{-\Psi(f)\}$ is a bounded, positive, continuous function on $L^2$, we immediately get that $0<Z<\infty$. Furthermore, Lemma~\ref{lem:conv of priors} and \eqref{Psiaux} imply
\bs\label{Zconv I}
\int_{\br^N}\exp\{-\Psi(\I_N \vec f_N)\}\dd\nu_N(\vec f_N)\to 
\int_{L^2}\exp\{-\Psi(f)\}\dd\nu(f).
\es
Lemma~\ref{lem:ZNtoZ} is proved once we establish the following result.  

\begin{lemma}
One has
\be\label{Zconv II}
\int_{\br^N}
\big|\exp\{-\Psi_N(\vec f_N)\}
-\exp\{-\Psi(\I_N \vec f_N)\}\big|
\dd\nu_N(\vec f_N) \to 0.
\ee
\end{lemma}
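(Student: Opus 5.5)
The plan is a tightness-plus-uniform-convergence argument. By \eqref{unif int}, the integrand in \eqref{Zconv II} is bounded by $2e^c$ for every $\vec f_N\in\br^N$. It therefore suffices to show that the integrand is uniformly small on a set carrying almost all of the $\nu_N$-mass, uniformly in $N$.

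The first step is to fix $\e>0$ and use Lemma~\ref{lem:conv of priors}. Since $\widetilde\nu_N\Rightarrow\nu$ on the Polish space $L^2$, Prokhorov's theorem shows that the family $\{\widetilde\nu_N\}$ is tight. We can then choose a compact $K\subset L^2$ with $\widetilde\nu_N(L^2\setminus K)\le\e$ for all $N$. Equivalently,
\[
\nu_N\big(\{\vec f_N:\ \I_N\vec f_N\notin K\}\big)\le\e .
\]
On this exceptional set the integrand is at most $2e^c$, so its contribution to the integral is at most $2e^c\e$.

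The second step handles the set where $\I_N\vec f_N\in K$. There we use \eqref{Psi conv bnd}, which gives
\[
\delta_N:=\sup_{\I_N\vec f_N\in K}|\Psi_N(\vec f_N)-\Psi(\I_N\vec f_N)|\to0 .
\]
Both $\Psi_N$ and $\Psi(\I_N\cdot)$ are bounded below by $-c$, by \eqref{unif int}. The map $x\mapsto e^{-x}$ is therefore Lipschitz on $[-c,\infty)$ with constant $e^c$, so the integrand is at most $e^c\delta_N$ on this set.

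Combining the two regions gives
\[
\limsup_N\int\big|e^{-\Psi_N}-e^{-\Psi\circ\I_N}\big|\,\dd\nu_N\le 2e^c\e + \limsup_N e^c\delta_N = 2e^c\e ,
\]
and since $\e>0$ is arbitrary, \eqref{Zconv II} follows.

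The main obstacle is whether \eqref{Psi conv bnd} holds as stated, uniformly over the non-fixed sequence $\I_N\vec f_N\in K$. This step needs several facts:
\begin{itemize}
\item $\CK(K)$ is compact in $C$, hence bounded and equicontinuous.
\item The values $(\CK_N\vec f_N)_i$ and $g_i$ lie in a fixed compact range.
\item $g_i$ is close to $g(t_i)$ uniformly in $i$.
\item The local Lipschitz property of $\Phi$ from Assumption~\ref{ass:setting}\eqref{Phi lip}, applied on this compact set, converts the uniform error \eqref{proj C error}, together with the Riemann-sum error for the continuous integrand, into a uniform error in the likelihood.
\end{itemize}
Since \eqref{Psi conv bnd} is already derived in the paper, it can be invoked directly, and the remaining argument is the short tightness split above.
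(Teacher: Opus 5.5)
Your proposal is correct and follows essentially the same route as the paper: fix $\varepsilon$, split the integral into $\{\I_N\vec f_N\in K_\varepsilon\}$ and its complement, use \eqref{Psi conv bnd} with the $e^c$-Lipschitz bound for $x\mapsto e^{-x}$ on $[-c,\infty)$ on the compact part, and use the trivial bound $2e^c\varepsilon$ on the rest. The only difference is cosmetic: you get tightness from $\widetilde\nu_N\Rightarrow\nu$ via the converse half of Prokhorov's theorem on the Polish space $L^2$, whereas the paper uses the tightness proved directly in Appendix~\ref{ssec:Gaussian tightness}; both are valid.
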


\begin{proof} 
Fix $\varepsilon>0$. In Appendix~\ref{ssec:Gaussian tightness} we proved that the family $\widetilde\nu_N=(\I_N)_\#\nu_N$ is tight. Hence there exists a compact set $K_\varepsilon\subset L^2$ such that, for all sufficiently large $N$,
$\widetilde\nu_N(K_\varepsilon)\ge 1-\varepsilon$. Equivalently, if
\be
A_{N,\varepsilon}:=\{\vec f_N\in\br^N:\ \I_N \vec f_N\in K_\varepsilon\},
\ee
then $\nu_N(A_{N,\varepsilon})\ge 1-\varepsilon$.

We split the desired integral into its contribution over $A_{N,\varepsilon}$ and over
$A_{N,\varepsilon}^c$. On $A_{N,\varepsilon}$, \eqref{Psi conv bnd} gives
\be
\delta_{N,\varepsilon}:=
\sup_{\vec f_N\in A_{N,\varepsilon}}
\big|\Psi_N(\vec f_N)-\Psi(\I_N\vec f_N)\big|\to 0.
\ee
The map $x\mapsto e^{-x}$ is Lipschitz on $[-a,\infty)$ with Lipschitz constant
$e^a$ for any $a\in\br$. Since both $\Psi_N$ and $\Psi$ are bounded below by $-c$ (see \eqref{unif int}), we have
\bs\label{in Ae}
&\int_{A_{N,\varepsilon}}
\left|
e^{-\Psi_N(\vec f_N)}
-e^{-\Psi(\I_N \vec f_N)}
\right|\,\dd\nu_N(\vec f_N)  \\
&\quad \le
e^c\sup_{\vec f_N\in A_{N,\varepsilon}}\left|
\Psi_N(\vec f_N)-\Psi(\I_N\vec f_N)\right|
= e^c\delta_{N,\varepsilon} \to 0.
\es
On the complement $A_{N,\varepsilon}^c$, we can use the right bounds in \eqref{unif int}.
Hence
\bs\label{in Aec}
&\int_{A_{N,\varepsilon}^c}
\left|
e^{-\Psi_N(\vec f_N)}-
e^{-\Psi(\I_N\vec f_N)}\right|
\,\dd\nu_N(\vec f_N)  
\le 2e^c\nu_N(A_{N,\varepsilon}^c)
\le 2e^c\varepsilon.
\es
Combining \eqref{in Ae} and \eqref{in Aec} yields
\be
\limsup_{N\to\infty}
\int_{\br^N}
\left|e^{-\Psi_N(\vec f_N)}-e^{-\Psi(\I_N \vec f_N)}\right|\,d\nu_N(\vec f_N)
\le 2e^c\varepsilon.
\ee
Since $\varepsilon>0$ is arbitrary, the claim follows.
\end{proof}

\subsection{Convergence of posteriors}\label{ssec:conv posters}
Define the finite-dimensional and continuum posteriors by
\be\label{two posters}
\frac{\dd\mu_N}{\dd\nu_N}(\vec f_N)
=\frac1{Z_N}\exp\{-\Psi_N(\vec f_N)\},\quad
\frac{\dd\mu}{\dd\nu}(f)
=\frac1{Z}\exp\{-\Psi(f)\}.
\ee
Note that $\mu_N$ is precisely the posterior measure introduced at the beginning of Section~\ref{sec:constr BIP} and used in Theorem~\ref{thm:poster conv}. Then for every bounded continuous $F:L^2\to\br$,
\be\label{last conv}
\begin{aligned}
\int_{L^2} F(u)\dd(\I_N)_\#\mu_N(u)
&=\frac{\int_{\br^N}F(\I_N\vec f_N)
\exp\{-\Psi_N(\vec f_N)\}\dd\nu_N(\vec f_N)}{Z_N}  \\
&\to \frac{\int_{L^2} F(f)\exp\{-\Psi(f)\}\dd\nu(f)}{Z}  \\
&=\int_{L^2} F(f)\dd\mu(f).
\end{aligned}
\ee

To prove convergence in the last equation, we only need to establish convergence of the numerators. This is done analogously to the preceding section using the stated properties of $F$. Indeed, the analog of \eqref{Zconv I} becomes
\bs\label{Fconv I}
\int_{\br^N}F(\I_N \vec f_N)\exp\{-\Psi(\I_N \vec f_N)\}\dd\nu_N(\vec f_N)\to 
\int_{L^2}F(f)\exp\{-\Psi(f)\}\dd\nu(f).
\es
The analog of \eqref{Zconv II} becomes
\be\label{Fconv II}
\int_{\br^N}
\big|F(\I_N \vec f_N)\exp\{-\Psi_N(\vec f_N)\}-F(\I_N \vec f_N)\exp\{-\Psi(\I_N \vec f_N)\}\big|
\dd\nu_N(\vec f_N) \to 0.
\ee
Since $F(\I_N\vec f_N)$ factors out of the difference inside the absolute value and $F$ is bounded, \eqref{Zconv II} implies \eqref{Fconv II}. Thus \eqref{last conv} and, therefore, Theorem~\ref{thm:poster conv} is proved.

\subsection{Relation to existing continuum BIP approximation frameworks}
\label{ssec:relation-discretization-invariance}

We now discuss how the construction in this section differs from the existing approaches to BIPs developed in \cite{lassas2009}, \cite{stuart2010, cotter2010, dashti2011, Dashti2017}, and \cite{lasanen2012, lasanen2012a}. For the purposes of the comparison with the present paper, the relevant common feature of these works is that the infinite-dimensional BIP is part of the starting framework. One specifies a continuum unknown, a continuum prior law, and a continuum observation likelihood, and then studies finite-dimensional approximations of this already formulated continuum BIP.

In the paper we start from the finite-dimensional Bayesian model \eqref{discr fnls}, \eqref{KN def}, with local finite-difference Gaussian priors and discrete GLM-type likelihoods, \emph{as they arise in numerical regularization}, see \eqref{LN prep}--\eqref{two priors}. The continuum prior $\nu$ and posterior $\mu$ are then recovered as weak limits of the reconstructed finite-dimensional priors $\nu_N$ and posteriors $\mu_N$, respectively; see Lemma~\ref{lem:conv of priors} and Theorem~\ref{thm:poster conv}. 

To better understand this difference, let us recall what happens when a continuum-first construction is discretized at the level of the prior. A common starting point is a continuum Gaussian prior $\nu=\CN(\vec 0,L^{-1})$, where $L$ is the continuum elliptic operator. Other priors are possible too, such as Besov priors \cite{lassas2009}. Then one defines a finite-dimensional prior by projection or truncation. Let $T_N$ denote the corresponding operator. The projected prior has the form $\nu_N^{\rm proj}:=(T_N)_\#\nu$ and the covariance is thus $T_N L^{-1}T_N^*$. 

In the paper, we start with the discrete quadratic regularizer $\tfrac12\langle \vec f,L_N\vec f\rangle_{\ell^2}$, so the finite-dimensio\-nal prior is $\nu_N = \CN(\vec 0,L_N^{-1})$. Here $L_N$ is a finite-difference approximation of $L$. For simplicity, in this comparison we absorbed the regularization parameter $\kappa$ into the corresponding operators.
In general, $L_N^{-1} \neq T_N L^{-1} T_N^*$. 

This distinction is important. In the continuum-first approach, the corresponding precision operator $(T_N L^{-1}T_N^*)^{-1}$ need not have a simple finite-difference, banded, or sparse structure. By contrast, the precision $L_N$ in the present construction is local. This is the form \emph{naturally used in numerical inverse problems}. Our approach therefore justifies the natural choice of local finite-difference regularizers, which are actually used in computations. We show that such priors have the desired continuum Bayesian interpretation.

A separate distinction concerns the likelihood. In \cite{lassas2009}, the observation model is linear with additive Gaussian white noise. In \cite{stuart2010, cotter2010, dashti2011, Dashti2017}, the forward map may be nonlinear, but the standard observation model is additive Gaussian noise, leading to a quadratic data-misfit in the residual. The present construction treats integrated GLM-type likelihoods directly at the discrete-to-continuum level. Thus the likelihood need not be quadratic in the predicted data variable and need not arise from additive Gaussian noise.

A more general approach is developed in \cite{lasanen2012a}. The paper provides an abstract framework for posterior convergence when an infinite-dimensional unknown is replaced by finite-dimensional, or more generally approximating, unknowns. Here again a continuum prior is assumed to exist. 

In \cite{lasanen2012a} the discrete priors do not have to be exact projections of the continuum prior. From this point of view, one might try to place the present construction inside the framework of \cite{lasanen2012a} by taking the approximating unknown to be the reconstructed function $\I_N\vec f_N$. However, doing so would not make the present result automatic. The main model-specific work would then be precisely to verify the hypotheses established in this section: the convergence of the local finite-difference priors and the consistency of the discrete likelihoods established in Sections~\ref{ssec:conv lklhd}--\ref{ssec:conv posters}.

\section{Small-noise, high-regularization scaling for GLM-type \protect\\ likelihoods}
\label{sec:small-noise-high-reg}

\subsection{Problem setting. Coupled limit}\label{ssec:diag lim setting}
We now specify the scaling regime in which the BIP posterior becomes increasingly concentrated. We assume that the total amount of information $\tau$ in \eqref{discr fnls} \emph{increases} as $N$ grows. In other words, we replace $\tau$ with $\tau_N$ in \eqref{discr fnls} and let $\tau_N\to\infty$. To make sure the likelihood and regularization terms remain balanced, we also replace $\kappa$ with $\kappa_N$ in \eqref{discr fnls} and let $\kappa_N\to\infty$. We assume that the ratio $\kappa_N/\tau_N$ remains constant.

Omitting an additive constant independent of $\vec f$, the negative log-posterior is therefore $\tau_N \Psinf(\vec f)$, where
\bs\label{PsiN def}
\Psinf(\vec f_N):=&\Psi_N(\vec f)+\frac{\rho}{2N}\sum_{i=0}^{N-1}\big[f_i^2+(D_N\vec f_N)_i^2\big],\\ 
\Psi_N(\vec f):=&\frac1N\sum_{i=0}^{N-1}\Phi\big((\CK_{N}\vec f_N)_i,g_i,t_i\big),\quad \rho=\frac{\kappa_N}{\tau_N},
\es
and $\rho>0$ is fixed. Thus the posterior density is proportional to $\exp\{-\tau_N\Psinf(\vec f_N)\}$. 

The scaling $\kappa_N/\tau_N=\text{const}$ keeps both the data term and the quadratic regularizer visible in the limiting continuum cost functional. The regularization term must remain visible in the limit $N\to\infty$, since the unregularized equation $\CK f=g$ may be severely ill-posed and may even fail to have a unique solution. 

We largely retain the notation introduced in the preceding sections. The only difference from the setting of Section~\ref{sec:constr BIP} is that the parameters $\tau=\tau_N$ and $\kappa=\kappa_N$ now depend on $N$.

The discrete MAP estimate $\vec u_{*,N}$ is the minimizer of \eqref{PsiN def}.  The expected continuum MAP estimate $u_*$ is obtained by minimizing the following functional over $f\in H^1$:
\bs\label{bip continuum action}
\Psif(f):=&\Psi(f)+\frac\rho2\int_{S^1}\big(f^2(t)+|f'(t)|^2\big)\dd t,\quad \Psi(f):=\int_{S^1}\Phi\big((\CK f)(t),g(t),t\big)\,\dd t.
\es
Similarly to \eqref{inv EL weak}, $u_*$ satisfies (see Lemma~\ref{lem:conv minim} below):
\be\label{inv EL weak L2}
\CK^*\big[\pa_1\Phi(\CK u_*,g,\cdot)\big]+\rho(-\pa_t^2+\Id)u_*=0\quad\text{in }H^{-1}.
\ee
 
Define the norm (see \eqref{PsiN def})
\bs\label{h1norm}
\|\vec f_N\|_{h_N^1}^2=\frac1{N}\sum_{i=0}^{N-1}\big[f_i^2+(D_N\vec f_N)_i^2\big],\quad \vec f_N\in\br^N.
\es

\begin{lemma}\label{lem:recon props}
For each $f\in H^1$ one has:
\begin{enumerate}
\item\label{weak ineq} If $\I_N\vec f_N\to f$ in $L^2$, then $\|f\|_{H^1}\le \liminf_{N\to\infty} \|\vec f_N\|_{h_N^1}$.
\item\label{cons eq} The sequence $\vec f_N=(N\I_N)^*f$ satisfies $\|\I_N\vec f_N-f\|_{L^2}\to0$ and $\|f\|_{H^1}= \lim_{N\to\infty} \|\vec f_N\|_{h_N^1}$.
\end{enumerate}
\end{lemma}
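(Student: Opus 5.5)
Part 1 is a lower-semicontinuity argument built on a piecewise-linear interpolant; part 2 is a direct computation with local averages.

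\emph{Part~\ref{weak ineq}.} Assume without loss of generality that $\liminf_N\|\vec f_N\|_{h_N^1}<\infty$, and pass to a subsequence along which the $\liminf$ is attained and $\|\vec f_N\|_{h_N^1}$ stays bounded. Alongside the piecewise constant $\I_N\vec f_N$, introduce the piecewise linear interpolant $\mathcal J_N\vec f_N$, which equals $f_i$ at $t_i$ and is linear on each $[t_i,t_{i+1}]$. Its derivative is $(D_N\vec f_N)_i$ on $(t_i,t_{i+1})$, so
\[
\|(\mathcal J_N\vec f_N)'\|_{L^2}^2=\tfrac1N\textstyle\sum_i (D_N\vec f_N)_i^2 .
\]
A direct computation gives $\|\mathcal J_N\vec f_N-\I_N\vec f_N\|_{L^2}^2\les N^{-2}\|D_N\vec f_N\|_N^2\to0$. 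Hence $\mathcal J_N\vec f_N\to f$ in $L^2$, and $(\mathcal J_N\vec f_N)'$ is bounded in $L^2$. Extract a further subsequence with $(\mathcal J_N\vec f_N)'\rightharpoonup h$ weakly in $L^2$. Testing against smooth periodic $\varphi$ and using the $L^2$ convergence identifies $h=f'$.

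For the zeroth-order term, $\|\mathcal J_N\vec f_N\|_{L^2}^2\le \|\vec f_N\|_N^2$, because the $L^2$ norm squared of a linear function on an interval is at most the average of its squared endpoint values. Weak lower semicontinuity of the norm in $L^2\times L^2$ applied to the pair $(\mathcal J_N\vec f_N,(\mathcal J_N\vec f_N)')$ then gives $\|f\|_{H^1}^2\le\liminf(\|\vec f_N\|_N^2+\|D_N\vec f_N\|_N^2)$. The subtle point is this zeroth-order comparison; using $\I_N\vec f_N$ there instead gives equality, $\|\I_N\vec f_N\|_{L^2}=\|\vec f_N\|_N$, which also suffices.

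\emph{Part~\ref{cons eq}.} The $L^2$ convergence holds because $P_N=\I_N(N\I_N)^*$ are orthogonal projections onto piecewise constants, and these are dense. For the norm, $f_i=N\int_{t_i}^{t_{i+1}}f$ gives $\|\vec f_N\|_N=\|P_Nf\|_{L^2}\to\|f\|_{L^2}$. For the derivative term,
\[
(D_N\vec f_N)_i=N^2\int_{t_i}^{t_{i+1}}\bigl(f(s+1/N)-f(s)\bigr)\,\dd s=\int_{\br}N\chi_N(s-t_i)f'(s)\,\dd s ,
\]
where $N\chi_N$ is the hat function of width $2/N$ with unit mass. Cauchy--Schwarz (Jensen) then gives $\tfrac1N\sum_i(D_N\vec f_N)_i^2\le\|f'\|_{L^2}^2$. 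Combined with Part~\ref{weak ineq}, this sandwich forces the limit to equal $\|f\|_{H^1}$.

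The main obstacle is the Jensen bound in Part~\ref{cons eq}: it needs the overlapping hat kernels to sum to at most one pointwise, which holds because their translates by $1/N$ form a partition of unity.
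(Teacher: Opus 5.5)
Your proof is correct, but it takes a different route from the paper's.

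\textbf{The paper's route.} The paper works entirely in Fourier variables. It expands $\vec f_N$ in the discrete basis $\vec e_k$, so that $\|\vec f_N\|_{h_N^1}^2=\sum_{k\in\kind}(1+4\pi^2k^2|c_{k,N}|^2)|\tilde h_{k,N}|^2$. Part~1 then follows from mode-wise convergence $\tilde h_{k,N}\to\tilde f_k$ together with truncation of the $H^1$ series. For part~2 it uses the aliasing formula $\tilde h_{k,N}=\sum_{\ell}c_{k+\ell N,N}\tilde f_{k+\ell N}$. It bounds the aliased terms by $A_{k,N}/N$, where $\sum_{k\in\kind}A_{k,N}^2$ is an $H^1$ tail. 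It then computes the limits of both the $L^2$ part and the derivative part directly, without invoking part~1.

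\textbf{Your route.} You argue in physical space. In part~1, the piecewise-linear interpolant, whose derivative is exactly $D_N\vec f_N$, reduces the claim to the standard weak-compactness and lower-semicontinuity ($\Gamma$-liminf) argument. In part~2 you write $(D_N\vec f_N)_i$ exactly as a hat-weighted average of $f'$. Jensen's inequality and the partition of unity then give the non-asymptotic bound $\|(N\I_N)^*f\|_{h_N^1}\le\|f\|_{H^1}$ for every $N$. The limit follows by sandwiching this upper bound against part~1.

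\textbf{Comparison.} Your argument is shorter and avoids the aliasing bookkeeping. It yields a uniform contraction estimate rather than only a limit. It also does not rely on diagonalizing $D_N$ on a uniform periodic grid, so it adapts more readily to the higher-dimensional or bounded-domain settings the paper leaves for future work. The paper's route reuses the spectral machinery ($e_k$, $\vec e_k$, $c_{k,N}$) already set up for the prior-convergence proof, and it gives an explicit modal formula for the discrete norm.

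\textbf{One small point.} The piecewise-constant spaces of mesh $1/N$ are not nested, so ``these are dense'' does not by itself give $P_Nf\to f$. Either combine $\|P_N\|\le 1$ with uniform convergence on continuous functions, or, since $f\in H^1$, use the direct estimate $\|f-P_Nf\|_{L^2}\le N^{-1}\|f'\|_{L^2}$.
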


The lemma is proved in Appendix~\ref{sec:recon props}.

\begin{lemma}\label{lem:conv minim} The minimization problems
\be
\vec u_{*,N}:=\mathrm{arg\,min}_{\vec f\in \br^N} \Psinf(\vec f),\quad 
u_*:=\mathrm{arg\,min}_{f\in H^1} \Psif(f),
\ee
have unique solutions and $\I_N \vec u_{*,N}\to u_*$ in $L^2$. Moreover, $u_*$ satisfies \eqref{inv EL weak L2}.
\end{lemma}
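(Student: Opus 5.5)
The argument is a $\Gamma$-convergence-type compactness proof in four steps: (i) existence and uniqueness for both problems, (ii) uniform $h^1_N$ bounds and compactness for the discrete minimizers, (iii) a liminf/recovery comparison identifying every limit point as $u_*$, and (iv) the Euler--Lagrange equation.

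\emph{Step (i).} For the discrete problem, $\Psinf$ is continuous and convex on $\br^N$, since $\pa_1^2\Phi\ge0$ (Assumption~\ref{ass:setting}\eqref{bmin}, valid for large $N$ as in the proof of Lemma~\ref{lem:QNAN BIP}) and $\CK_N$ is linear. The regularizer $\frac\rho2\|\vec f\|_{h_N^1}^2$ is strongly convex, and $\Psi_N\ge -c$, so a unique minimizer exists. For the continuum problem on $H^1$, $\Psi$ is convex and bounded below. It is also continuous on $L^2$, because $\CK:L^2\to C$ is bounded and $\Phi$ is locally Lipschitz; hence it is continuous on $H^1$. Adding $\frac\rho2\|f\|_{H^1}^2$ gives a strongly convex, coercive, weakly lower semicontinuous functional, so $u_*$ exists and is unique.

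\emph{Step (ii): uniform bounds and compactness.} Take $f\equiv0$ as a competitor. Then
\[
\Psinf(\vec u_{*,N})\le \Psinf(\vec 0)=\frac1N\sum_i\Phi(0,g_i,t_i),
\]
which is bounded uniformly in $N$ because $g_i$ is bounded and $\Phi$ is continuous. Since $\Psi_N\ge -c$, this gives $\sup_N\|\vec u_{*,N}\|_{h_N^1}<\infty$. A uniform $h_N^1$ bound makes $\{\I_N\vec u_{*,N}\}$ precompact in $L^2$. I expect this to be the main technical point, and I would handle it with a discrete Kolmogorov--Riesz argument. The translation estimate
\[
\|\I_N\vec f(\cdot+h)-\I_N\vec f\|_{L^2}^2\les (|h|+N^{-1})\,\|D_N\vec f\|_N^2
\]
holds because a shift by $k/N$ costs at most $k/N$ times the discrete derivative (by Cauchy--Schwarz), and fractional shifts add one extra step. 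Alternatively, I would reuse whatever tightness argument Appendix~\ref{ssec:Gaussian tightness} provides for such sets.

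\emph{Step (iii): identification of the limit.} Let $\I_N\vec u_{*,N}\to \bar u$ in $L^2$ along a subsequence. By Lemma~\ref{lem:recon props}\eqref{weak ineq}, $\bar u\in H^1$ and $\|\bar u\|_{H^1}^2\le\liminf\|\vec u_{*,N}\|_{h_N^1}^2$. Lemma~\ref{lem:PsiNPsi}, with $\tau=1$, gives $\Psi_N(\vec u_{*,N})\to\Psi(\bar u)$. Hence $\Psif(\bar u)\le\liminf\Psinf(\vec u_{*,N})$. For the recovery bound, take any $f\in H^1$ and set $\vec f_N=(N\I_N)^*f$. Lemma~\ref{lem:recon props}\eqref{cons eq} together with Lemma~\ref{lem:PsiNPsi} gives $\Psinf(\vec f_N)\to\Psif(f)$. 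By minimality, $\Psinf(\vec u_{*,N})\le\Psinf(\vec f_N)$, so $\Psif(\bar u)\le \Psif(f)$ for every $f\in H^1$. Thus $\bar u$ is a minimizer, and by uniqueness $\bar u=u_*$. Since every subsequence has a further subsequence converging to $u_*$, the whole sequence satisfies $\I_N\vec u_{*,N}\to u_*$ in $L^2$.

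\emph{Step (iv): the Euler--Lagrange equation.} Differentiate $\epsilon\mapsto\Psif(u_*+\epsilon\theta)$ at $\epsilon=0$ for $\theta\in H^1$. Because $\CK u_*$ and $\CK\theta$ are bounded and $\pa_1\Phi$ is continuous, dominated convergence gives the derivative
\[
\int\pa_1\Phi(\CK u_*,g,t)\,\CK\theta\,\dd t+\rho\int(u_*\theta+u_*'\theta')\,\dd t=0.
\]
The function $\pa_1\Phi(\CK u_*,g,\cdot)$ lies in $C\subset L^2$, so the first term equals $\langle\CK^*[\pa_1\Phi(\CK u_*,g,\cdot)],\theta\rangle$. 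This is precisely the weak form of \eqref{inv EL weak L2} in $H^{-1}$.
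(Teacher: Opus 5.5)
Your proposal is correct and follows essentially the same route as the paper. The ingredients match: strong convexity and coercivity for existence and uniqueness, a uniform $h_N^1$ bound with $L^2$ precompactness of $\mathcal I_N\vec u_{*,N}$, the liminf inequality from Lemma~\ref{lem:recon props}\eqref{weak ineq} combined with Lemma~\ref{lem:PsiNPsi}, the recovery sequence $(N\mathcal I_N)^*f$, uniqueness of $u_*$, and the first variation for \eqref{inv EL weak L2}. The differences are minor: you get the uniform bound from the competitor $\vec 0$ instead of the recovery sequence for $u_*$, and you obtain compactness from a translation (Kolmogorov--Riesz) estimate instead of the paper's bound $\|\mathcal I_N\vec f_N\|_{\mathrm{BV}}\le\sqrt2\|\vec f_N\|_{h_N^1}$ together with the compact embedding of $\mathrm{BV}(S^1)$ into $L^2$.
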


\begin{proof}
Earlier, we showed that the functionals $\Psinf$, $N\in\N$, are coercive and strongly convex and therefore possess unique minimizers; see the paragraph preceding \eqref{inv EL weak}. We now establish the corresponding result for $\Psif$. The argument is somewhat similar.

The data term $\Psi(f)$ in \eqref{bip continuum action} is convex, continuous, and bounded from below, while the regularization term $(\rho/2)\|f\|_{H^1}^2$ is strongly convex and coercive on $H^1$. Consequently, $\Psif$ is proper, strongly convex, coercive, and continuous on $H^1$. Then the minimizer $u_*$ exists and is unique \cite[Theorem 1.3.1]{Ceg2012}, \cite[Corollary 2.20]{Peyp2015}.

Finally, the continuity of $\pa_1\Phi$ (Assumption~\ref{ass:setting}\eqref{Phi lip}) implies that $\Psif$ is differentiable. Hence $u_*$ satisfies the first-order optimality condition $D\Psif(u_*)[h]=0$, $h\in H^1$,
which is precisely \eqref{inv EL weak L2}; see, for example, \cite[Proposition 3.20 and Theorem 3.24]{Peyp2015}.

By Lemma~\ref{lem:recon props}\eqref{cons eq}, choose $\vec f_N$ such that $\I_N\vec f_N\to u_*$ in $L^2$ and $\|\vec f_N\|_{h^1_N}\to \|u_*\|_{H^1}$. By Lemma~\ref{lem:PsiNPsi}, $\Psi_N(\vec f_N)\to \Psi(u_*)$. Since $\vec u_{*,N}$ minimizes $\Psinf$,
$\Psinf(\vec u_{*,N})\le \Psinf(\vec f_N)$. Therefore 
\be\label{PsiN bnd}
\limsup_{N\to\infty}\Psinf(\vec u_{*,N})\le \Psif(u_*).
\ee

Recall that for a bounded variation function we have
\be
\|f\|_{\mathrm{BV}}:=\int_{S^1}|f(t)|\dd t+\mathrm{TV}(f),\quad
\TV(f):=\sup_{\substack{\varphi\in C^1,\|\varphi\|_{L^\infty}\le 1}}
\int_{S^1} f(t)\varphi'(t)\,\dd t.
\ee
Applying this to $f=\I_N\vec f_N$ we get
\be
\|f\|_{\mathrm{BV}}=\frac1N\sum_{i=0}^{N-1}|f_i|+\sum_{i=0}^{N-1}|f_{i+1}-f_i|
\le \sqrt2\|\vec f_N\|_{h_N^1}.
\ee

Denote $u_{*,N}:=\I_N\vec u_{*,N}$. By \eqref{PsiN bnd}, $\|\vec u_{*,N}\|_{h_N^1}\les 1$ uniformly in $N$. A bounded set in $\mathrm{BV}(S^1)$ is precompact in $L^2$ \cite[Proposition 4.1(5)]{bergounioux2017}. Take a subsequence such that $u_{*,N_k}$ converges to some $h$ in $L^2$. As above, the likelihood term converges along this subsequence: $\Psi_N(\vec u_{*,N_k})\to \Psi(h)$.
By Lemma~\ref{lem:recon props}\eqref{weak ineq},
\be
\|h\|_{H^1}\le \liminf_{N_k\to\infty}\|\vec u_{*,N_k}\|_{h^1_{N_k}}.
\ee
Hence
\be
\Psif(h) 
\le \liminf_{N_k\to\infty}\Psinf(\vec u_{*,N_k})
\le \limsup_{N_k\to\infty}\Psinf(\vec u_{*,N_k})
\le \Psif(u_*).
\ee
Since $u_*$ is the unique minimizer of $\Psif$, we get $h=u_*$. Thus every subsequence of $u_{*,N}$ has an $L^2$-convergent further subsequence and its limit is $u_*$. Therefore $u_{*,N}\to u_*$ in $L^2$.
\end{proof}

We expect that, to zeroth order, the posterior first concentrates on $u_*$, and the next order is governed by the Gaussian fluctuation field associated with the Hessian of \eqref{PsiN def} at $\vec u_{*,N}$. This is established in the next sections. But first we remind the reader of the result of \cite{katsevich2025c} and \cite{katsevich2026a} that we need here.

\subsection{High-dimensional Laplace asymptotics}\label{sec:high-d lapl} 

The results recalled in this subsection are independent of Assumptions~\ref{ass:det apert} and~\ref{ass:setting}.

We begin with the asymptotics of the integral
\be\label{main int v0}
I(\la)=\left(\frac{\la}{2\pi}\right)^{d/2}\int_{\br^d}e^{-\la f(\vec x)}\dd \vec x
\ee
as $d,\la\to\infty$. Denote $\e:=d/\la$ and $\|\cdot\|:=\|\cdot\|_{\ell^2}$. We make the following assumption on $f$.

\begin{assumption}\label{ass:f}$\hspace{1cm}$

\noindent
For some constants $c_f,r_0>0$ and $\varkappa>0$ independent of $d$ and $\la$ one has
\begin{align}\label{eq:Taylor-f}
&\big|f(\vec x)-\tfrac12 \norm{\vec x}^2\big|\le c_f\norm{\vec x}^3,\quad \norm{\vec x}\le r_0,\\
\label{f-suff}
&f(\vec x)\ge \varkappa\min\left\{\|\vec x\|^2/2, \sqrt{\e}\norm{\vec x}\right\},\quad \vec x\in\br^d.
\end{align}
\end{assumption}

The following result is proved in Appendix~\ref{sec:tail bnd proof} using the approach in \cite{katsevich2026a}.
\begin{lemma}\label{lem:tail bnd} Pick any $a>0$. Suppose $f$ satisfies Assumption~\ref{ass:f}. Let $\vec\xi\in \br^d$ be the random vector with the law $\propto e^{-\la f(\vec x)}$. There exist $R$ sufficiently large and $\de>0$ sufficiently small such that for any $d,\la$ satisfying $d/\la\leq \de$, 
it holds 
\be\label{tail bnd}
\BP\big(\{\|\vec\xi\|\ge R\e^{1/2}\}\big)\les \exp\{-ad\}.
\ee
\end{lemma}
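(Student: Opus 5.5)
We write $\BP(\|\vec\xi\|\ge R\e^{1/2})=J_{\rm out}/J$, where $J_{\rm out}$ and $J$ are the integrals of $e^{-\la f}$ over $\{\|\vec x\|\ge R\e^{1/2}\}$ and over $\br^d$. The plan is to bound $J$ from below by comparison with a Gaussian near the origin, and $J_{\rm out}$ from above using the lower bound \eqref{f-suff}. Throughout we use the identity $\la\e=d$.

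\emph{Lower bound for $J$.} Restrict to the ball $B=\{\|\vec x\|\le 2\e^{1/2}\}$, which lies inside $\{\|\vec x\|\le r_0\}$ once $\de\le r_0^2/4$. On $B$, \eqref{eq:Taylor-f} gives
\[
\la f(\vec x)\le \tfrac{\la}{2}\|\vec x\|^2+\la c_f\|\vec x\|^3\le \tfrac{\la}{2}\|\vec x\|^2+8c_f\,d\,\e^{1/2}.
\]
Hence
\[
J\ge e^{-8c_f d\sqrt\de}\int_B e^{-\la\|\vec x\|^2/2}\dd\vec x.
\]
The Gaussian $\CN(0,\la^{-1}I_d)$ has $\|\vec x\|^2$ concentrated near $d/\la=\e$, so it puts mass at least $1/2$ on $B$ for large $d$ by Chebyshev. (For bounded $d$ the estimate \eqref{tail bnd} is handled separately; there $\e^{1/2}\ge \la^{-1/2}$ and a slightly larger ball works with a constant-factor loss.) This gives
\[
J\ge \tfrac12(2\pi/\la)^{d/2}e^{-C d\sqrt\de}.
\]

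\emph{Upper bound for $J_{\rm out}$.} Split the region into the shell $R\e^{1/2}\le\|\vec x\|\le 2\sqrt\e$ and its exterior $\|\vec x\|\ge 2\sqrt\e$. On the shell the minimum in \eqref{f-suff} is $\|\vec x\|^2/2$. On the exterior the minimum is $\sqrt\e\|\vec x\|$. (If $R>2$ the shell is empty and only the exterior matters; we take $R\ge2$.)

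On the exterior, pass to polar coordinates and set $s=\|\vec x\|/\e^{1/2}$. Then
\[
\la\varkappa\sqrt\e\,\|\vec x\|=\varkappa d s,
\]
and the contribution is
\[
|S^{d-1}|\,\e^{d/2}\int_R^\infty s^{d-1}e^{-\varkappa d s}\dd s .
\]
Divide by $(2\pi/\la)^{d/2}$. Using $|S^{d-1}|=2\pi^{d/2}/\Gamma(d/2)$ and $\la\e=d$, the ratio becomes
\[
\frac{2}{\Gamma(d/2)}\Big(\frac d2\Big)^{d/2}\int_R^\infty s^{d-1}e^{-\varkappa d s}\dd s\les d^{C}e^{d/2}\int_R^\infty e^{d(\log s-\varkappa s)}\dd s .
\]
For $R$ large, the exponent satisfies $\log s-\varkappa s\le -\varkappa s/2$ for $s\ge R$. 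So this ratio is $\les \exp\{d(\tfrac12+C'-\varkappa R/2)\}$ up to polynomial factors. It is at most $e^{-(a+1)d}$ once $R$ is large depending on $a$ and $\varkappa$.

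The same polar computation applies to the shell (or to the region $s\ge R$ when using the quadratic branch). There the integrand is $s^{d-1}e^{-\varkappa d s^2/2}$, which gives an even smaller Gaussian tail.

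\emph{Conclusion and main obstacle.} Combining the two bounds, the ratio is $\les e^{Cd\sqrt\de}\,e^{-(a+1)d}\le e^{-ad}$ after choosing $\de$ small so that $C\sqrt\de\le 1$. The main obstacle is the bookkeeping of $d$-dependent constants. Specifically, the entropy factor $e^{d/2}$ from Stirling, the $\Gamma$ normalization, and the cubic-error loss $e^{Cd\sqrt\de}$ must all be beaten by a linear-in-$d$ exponent $\varkappa R d/2$. This works because every loss is at most exponential in $d$ with a constant independent of $R$, while the gain grows with $R$. I would first verify the polar/Stirling estimate carefully, and handle small $d$ with a crude argument.
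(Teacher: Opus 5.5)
Your argument is correct and follows the paper's route. You bound the inner integral from below by comparison with $\CN(\vec 0,\la^{-1}I_d)$ via the cubic bound \eqref{eq:Taylor-f}, and the outer integral from above via \eqref{f-suff}; the paper imports the outer bounds (its $T_1,T_2$) from its Laplace-asymptotics reference, whereas you derive them by polar coordinates and Stirling, and your smaller ball of radius $2\sqrt\e$ merely decouples $\de$ from $R$. The separate treatment of small $d$ is unnecessary, since Markov's inequality gives $\BP_{\CN(\vec 0,\la^{-1}I_d)}\big(\|\vec x\|\le 2\sqrt\e\big)\ge 3/4$ for every $d$.
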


The following is Theorem 2.2 of \cite{katsevich2025c}, which we restate here in a form convenient for our purposes. In the notation of \cite{katsevich2025c}, we set $\hat\theta=\vec 0$, $\Theta=\br^d$ and $H=I_d$.

\begin{theorem}\label{thm:meas} Let $f\in C^2(\br^d)$ be convex with unique strict global minimizer $\vec 0$ and $\nabla^2 f(\vec 0)=I_d$. Suppose there exists $r_0>0$ such that 
\be
\|\nabla^2 f(\vec x)-I_d\|_{\mathrm{op}}\les \|\vec x\|,\quad \|\vec x\|\le r_0. 
\ee
Then, for all $d$ and $\la$ so that $d/\la$ and $\log\la/d$ are sufficiently small, it holds
\be\label{main res meas}
\mathrm{TV}(\pi,\ga)=\tfrac12\sup_{\|g\|_{L^\infty(\br^d)}\leq1}\left|\E_{X\sim\pi}[g(X)]-\E_{X\sim\ga}[g(X)]\right|\les\frac{d}{\la^{1/2}},
\ee 
where $\pi\propto e^{-\la f(\vec x)}$ and $\ga=\mathcal \CN(\vec 0, \la^{-1}I_d)$. 
\end{theorem}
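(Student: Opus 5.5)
The statement is Theorem~2.2 of \cite{katsevich2025c} specialized to $\hat\theta=\vec 0$, $\Theta=\br^d$, $H=I_d$, so the proof reduces to checking that its hypotheses match ours. Still, here is the direct argument I would reconstruct, to see where $d/\la^{1/2}$ comes from.

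\emph{Rescaling.} I would set $\vec x=\vec y/\sqrt\la$. Then $\ga$ becomes $\CN(\vec 0,I_d)$ and $\pi$ becomes the law with density proportional to $e^{-\frac12\|\vec y\|^2-r(\vec y)}$, where
\[
r(\vec y):=\la\big[f(\vec y/\sqrt\la)-\tfrac12\|\vec y\|^2/\la\big].
\]
Integrating the Hessian bound along rays gives $|\nabla f(\vec x)-\vec x|\les\|\vec x\|^2$ and $|f(\vec x)-\tfrac12\|\vec x\|^2|\les\|\vec x\|^3$ for $\|\vec x\|\le r_0$. Hence
\[
|\nabla r(\vec y)|\les \|\vec y\|^2/\sqrt\la \quad\text{for } \|\vec y\|\le r_0\sqrt\la .
\]

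\emph{Localization.} Convexity together with $\nabla^2 f\succeq \tfrac12 I_d$ on a small ball gives $f(\vec x)\ge\varkappa\min\{\|\vec x\|^2/2,c\|\vec x\|\}$ globally, which implies \eqref{f-suff} once $\e=d/\la$ is small. So Assumption~\ref{ass:f} holds, and Lemma~\ref{lem:tail bnd} shows that $\pi$ puts mass at most $e^{-ad}$ outside the ball $B=\{\|\vec y\|\le R\sqrt d\}$. The Gaussian tail outside $B$ is of the same order. Since $\log\la/d$ is small, $e^{-ad}\le\la^{-1/2}\le d/\sqrt\la$, so it suffices to compare $\pi$ and $\ga$ restricted to $B$. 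I would replace $r$ by a modification $\tilde r$ that agrees with $r$ on $B$ and whose gradient is globally bounded by $\min(\|\vec y\|,R\sqrt d)^2/\sqrt\la$.

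\emph{Main estimate via Poincaré rather than Taylor.} A naive bound $|r|\les\|\vec y\|^3/\sqrt\la\sim d^{3/2}/\sqrt\la$ loses a factor $\sqrt d$. Instead I would bound the fluctuation of $\tilde r$. The Gaussian Poincaré inequality gives
\[
\mathrm{Var}_\ga(\tilde r)\le\E_\ga|\nabla\tilde r|^2\les\E\|\vec y\|^4/\la\les d^2/\la .
\]
This already produces the correct rate $d/\sqrt\la$ using only a Lipschitz Hessian, with no third derivatives. To turn the variance bound into a TV bound, I would write
\[
\TV\big(e^{-\tilde r}\ga/Z,\ \ga\big)\le\E_\ga\big|e^{-(\tilde r-m)}-1\big|\big/\E_\ga e^{-(\tilde r-m)},
\]
where $m$ is the Gaussian median (or mean) of $\tilde r$.

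\emph{Main obstacle.} The difficulty is controlling the exponential in that last display when $\tilde r-m$ is not uniformly small, since on $B$ it can be as large as $d^{3/2}/\sqrt\la$. I would first try Gaussian concentration for the Lipschitz function $\tilde r$ with Lipschitz constant $L\les d/\sqrt\la$ (small by assumption). This gives sub-Gaussian tails $\BP(|\tilde r-m|>s)\le 2e^{-s^2/(2L^2)}$, hence $\E e^{2|\tilde r-m|}\les 1$ and $\E|e^{-(\tilde r-m)}-1|\les L$. This would close the argument. If the Lipschitz truncation proves awkward, the fallback is Pinsker combined with a log-Sobolev bound on the relative entropy, $\mathrm{KL}(\pi\|\ga)\le\tfrac12\E_\pi|\nabla r|^2$, together with a moment bound under $\pi$ taken from Lemma~\ref{lem:tail bnd}.
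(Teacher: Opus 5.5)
Your first sentence matches the paper's treatment exactly: the paper gives no argument of its own and just restates Theorem~2.2 of \cite{katsevich2025c} with $\hat\theta=\vec 0$, $\Theta=\br^d$, $H=I_d$. The direct sketch you add is a genuinely different, essentially self-contained route, and it is sound.

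After normalizing $f(\vec 0)=0$, you verify \eqref{eq:Taylor-f} and \eqref{f-suff} from the Hessian bound and convexity. This is the same argument the paper uses for $\psi_N$ in Section~\ref{ssec:post conc}. You then localize to $B=\{\|\vec y\|\le R\sqrt d\}$ with Lemma~\ref{lem:tail bnd}. Finally, you apply dimension-free Gaussian concentration to $\tilde r:=r\circ P_B$, where $P_B$ is the metric projection onto $B$. Its Lipschitz constant is $\sup_B|\nabla r|\les R^2d/\sqrt\la$. Concentration alone already yields $\TV(\tilde\pi,\ga)\les d/\sqrt\la$, so the Poincar\'e step is redundant.

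To make the sketch airtight, make the following details explicit.
\begin{enumerate}
\item Assume without loss of generality that $d/\sqrt\la\le c$, since otherwise the claim is trivial. Then $L\les 1$ and $\E_\ga e^{2|\tilde r-m|}\les 1$.
\item Lower-bound the denominator by Jensen: $\E_\ga e^{-(\tilde r-m)}\ge e^{-\E_\ga(\tilde r-m)}\gtrsim 1$.
\item When passing from $\pi$ to $\tilde\pi$, control $\tilde\pi(B^c)\le\ga(B^c)+\TV(\tilde\pi,\ga)$ in addition to $\pi(B^c)$.
\item Fix constants in this order: first $a$, then $R$ and $\de$ from the lemma, then the smallness of $d/\la$. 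The last choice also guarantees $R\sqrt d\le r_0\sqrt\la$.
\end{enumerate}

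The citation buys brevity and the full generality of \cite{katsevich2025c}. Your route shows transparently where each hypothesis enters: $d/\la$ small is needed for localization and the Taylor bounds, and $\log\la/d$ small is needed only to make $e^{-ad}\le\la^{-1/2}$. It also needs only a Lipschitz Hessian. It does, however, still inherit the dependence on \cite{katsevich2026a} through Lemma~\ref{lem:tail bnd}.
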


\subsection{Posterior concentration}\label{ssec:post conc}
Let $\vec \eta=\vec f_N-\vec u_{*,N}$ be the fluctuation variable. The corresponding fluctuation measure becomes
\be\label{bip fluc measure}
\dd\mu_N^d(\vec \eta)
=(Z_N^d)^{-1}\exp\{-\tau_N\Psinf(\vec u_{*,N}+\vec \eta)\}\,\dd\vec \eta,
\ee
where $\Psinf$ is defined in \eqref{PsiN def} and $Z_N^d$ is the normalizing constant. Throughout this section, the superscript $d$ labels measures and normalizing constants associated with the direct coupled limit described at the beginning of Section~\ref{ssec:diag lim setting}. The reconstructed posterior field is
\be
f_{N}:=\I_N \vec f_N=\I_N(\vec u_{*,N}+\vec \eta),
\quad \vec \eta\sim\mu_N^d,
\ee
and its law is the pushforward measure
\be
\nu_N^d:=(\vec \eta\to\I_N(\vec u_{*,N}+\vec \eta))_\#\mu_N^d.
\ee

\begin{lemma} One has
$\nu_N^d\Rightarrow \delta_{u_*}$ weakly in $L^2$ provided $\tau_N/N\to\infty$.
\end{lemma}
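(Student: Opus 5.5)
Since $\I_N\vec u_{*,N}\to u_*$ in $L^2$ by Lemma~\ref{lem:conv minim}, it suffices to show $\|\I_N\vec\eta\|_{L^2}\to0$ in $\mu_N^d$-probability. Indeed, weak convergence to a Dirac mass is equivalent to convergence in probability, and $\|\I_N\vec\eta\|_{L^2}=\|\vec\eta\|_N=N^{-1/2}\|\vec\eta\|_{\ell^2}$. The plan is to whiten the fluctuation variable and apply the tail bound of Lemma~\ref{lem:tail bnd} with $d=N$ and $\la=\tau_N$. The hypothesis $\tau_N/N\to\infty$ gives exactly $\e=N/\tau_N\to0$.

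\textbf{Whitening.} Write $\vec\eta=A_N\vec x$, where $A_N=Q_N^{-1/2}$ and $Q_N$ is the Hessian of $\Psinf$ (the $\rho$-scaled version of \eqref{Q inv parallel} with $\tau=1$, $\kappa=\rho$). Then $\vec x$ has law $\propto e^{-\tau_N\psi_N(\vec x)}$ with $\psi_N$ as in \eqref{inv whitened exponent}. Lemmas~\ref{lem:QNAN BIP} and \ref{lem:psi ders bdd} still apply, since with $\tau=1$ they only use Assumption~\ref{ass:setting}. Next I verify Assumption~\ref{ass:f} for $f=\psi_N$. The local bound \eqref{eq:Taylor-f} follows from \eqref{at minimizer} and the uniform third-derivative bound \eqref{inv whitnd ders bnd} by Taylor's theorem. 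For the global growth bound \eqref{f-suff} I use convexity. The function $\psi_N$ is convex, and on $\|\vec x\|\le r_1$, with $r_1\le r_0$ small, we have $\psi_N(\vec x)\ge \tfrac14\|\vec x\|^2$. For $\|\vec x\|>r_1$, convexity along the ray through $0$ gives $\psi_N(\vec x)\ge (\|\vec x\|/r_1)\psi_N(r_1\vec x/\|\vec x\|)\ge \tfrac{r_1}{4}\|\vec x\|\ge \tfrac{r_1}{4}\sqrt{\e}\|\vec x\|$ once $\e\le1$. Hence $\psi_N(\vec x)\ge\varkappa\min\{\|\vec x\|^2/2,\sqrt\e\|\vec x\|\}$ with constants independent of $N$.

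\textbf{Tail bound and conclusion.} Lemma~\ref{lem:tail bnd} now gives $\BP(\|\vec x\|\ge R\sqrt{N/\tau_N})\les e^{-aN}$ for all large $N$. On the complementary event,
\[
\|\vec\eta\|_N=N^{-1/2}\|A_N\vec x\|_{\ell^2}\le \|A_N\vec x\|_{\ell^\infty}\les\|\vec x\|_{\ell^2}\le R\sqrt{N/\tau_N}\to0,
\]
using $\|A_N\|_{\ell^2\to\ell^\infty}\les1$ from Lemma~\ref{lem:QNAN BIP}. Therefore $\|\I_N\vec f_N-u_*\|_{L^2}\le\|\I_N\vec\eta\|_{L^2}+\|\I_N\vec u_{*,N}-u_*\|_{L^2}\to0$ in probability, which yields $\nu_N^d\Rightarrow\delta_{u_*}$. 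For instance, for bounded Lipschitz test functions, split the expectation over the good event and its complement.

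\textbf{Main obstacle.} I expect the hardest step to be verifying \eqref{f-suff} uniformly in $N$. That requires the uniform radius $r_0$ and constant in Lemma~\ref{lem:psi ders bdd}, which in turn depend on the uniform third-derivative bound \eqref{data higher der bnd inv} near $r_i$, and these must be checked to hold in the present scaling. Convexity of $\psi_N$ is immediate from $\pa_1^2\Phi\ge0$ plus the quadratic regularizer, so the rest of the convexity argument is routine.
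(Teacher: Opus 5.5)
Your proposal is correct and follows the paper's proof essentially step for step: you whiten via $A_N=Q_N^{-1/2}$, verify Assumption~\ref{ass:f} for $\psi_N$ using Lemmas~\ref{lem:QNAN BIP} and~\ref{lem:psi ders bdd} together with convexity, apply Lemma~\ref{lem:tail bnd} with $d=N$ and $\la=\tau_N$, and combine the result with the center convergence from Lemma~\ref{lem:conv minim}. Your ray-convexity bound $\psi_N(\vec x)\ge(\|\vec x\|/r_1)\,\psi_N(r_1\vec x/\|\vec x\|)$ is a cleaner, fully explicit version of the paper's crossover-scale justification of \eqref{f-suff}.
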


\begin{proof}
As is easily checked, $\|\I_N\vec \eta\|_{L^2}=N^{-1/2}\|\vec \eta\|_{\ell^2}$. By Lemma~\ref{lem:QNAN BIP}, $\|A_{N}\|_{\ell^2\to\ell^2}\les N^{1/2}$. Hence, after whitening $\vec \eta=A_{N}\vec x$, we have $\|\I_N\vec \eta\|_{L^2} \les \|\vec x\|_{\ell^2}$.


We now verify that the function $\psi_N(\vec x)$ defined in \eqref{inv whitened exponent} satisfies Assumption~\ref{ass:f}. Its definition in the present section is the same as in \eqref{inv whitened exponent}, but with $\Psinf$ now given by \eqref{PsiN def} rather than by \eqref{discr fnls}. Since the functional in \eqref{PsiN def} does not contain the overall factor $\tau_N$, the functional in \eqref{PsiN def} is obtained from the one in \eqref{discr fnls} by setting $\tau=1$ and replacing $\kappa$ with $\rho$. Consequently, all the properties of $\psi_N$ established in Section~\ref{sec:gen estims} continue to hold in the present setting.

By Lemma~\ref{lem:psi ders bdd} and \eqref{at minimizer}, $\psi_N$ satisfies \eqref{eq:Taylor-f} in Assumption~\ref{ass:f}. Combining these facts with Assumption~\ref{ass:setting}\eqref{bmin}, we conclude that $\psi_N$ is convex and that its Hessian is uniformly positive definite in a fixed-radius neighborhood of the origin. Hence $\psi_N(\vec x)$ satisfies \eqref{f-suff} in Assumption~\ref{ass:f}. 

Indeed, near the origin the quadratic term in the minimum on the right-hand side of \eqref{f-suff} is smaller than the linear term, whereas farther from the origin the linear term is smaller. The two terms are comparable when $\|\vec x\|_{\ell^2}\sim\sqrt\e$. At this crossover scale, the radial derivative of the quadratic lower bound is of order $\sqrt\e$. Convexity then implies at least linear growth beyond this scale, with slope of order $\sqrt\e$. This proves \eqref{f-suff}.

Lemma~\ref{lem:tail bnd} gives with $d=N$ and $\la=\tau_N$
\be
\BP\big(\big\{\|\vec x\|_{\ell^2}\le R(N/\tau_N)^{1/2}\big\}\big)\to 1
\ee
for a sufficiently large fixed $R$. Hence $\I_N\vec \eta\to0$ in probability in $L^2$ provided $N/\tau_N\to0$. Furthermore, Lemma~\ref{lem:conv minim} gives convergence of the deterministic centers: $\I_N\vec u_{*,N}\to u_*$ in $L^2$. Therefore
\be
f_{N}=\I_N\vec u_{*,N}+\I_N\vec \eta \to u_*
\quad\text{in probability in }L^2.
\ee
Convergence in probability to a deterministic limit implies weak convergence
of the corresponding laws to the delta function at that limit. 
\end{proof}

\subsection{Gaussian scale}\label{ssec:Gaus scale}
We now pass to the Gaussian scale.  Define the rescaled centered fluctuation
function
\be\label{bip rescaled fluct field}
\xi_{N}:=\sqrt{\tau_N}\,\I_N\vec \eta, \quad \vec \eta\sim\mu_N^d,
\ee
where $\mu_N^d$ is defined in \eqref{bip fluc measure}. Let $\ga_N:=\CN(\vec 0,Q_N^{-1})$ be the centered Gaussian on $\br^N$, where $Q_N$ is the Hessian of $\Psinf$ at $\vec u_{*,N}$:
\bs\label{bip discrete Hessian}
\langle \vec \eta,Q_N\vec \eta\rangle_{\ell^2}
=\textstyle
&\tfrac1{N}\tsum_{i=0}^{N-1}
b_{i}(\CK_{N}\vec \eta)_i^2
+\frac{\rho}N\sum_{i=0}^{N-1}\big[\eta_i^2+(D_N\vec \eta)_i^2\big],\\
b_{i}:=&\pa_1^2\Phi\big((\CK_{N}\vec u_{*,N})_i,g_i,t_i\big).
\es

In the preceding subsection we verified that $\psi_N(\vec x)$ is convex, has a global minimum at $\vec0$, $\nabla^2\psi_N(\vec 0)=I_N$, and $\|\nabla^3\psi_N(\vec x)\|_{\mathrm{op}}$ is uniformly bounded in $N$ and $\vec x$ in a fixed-radius neighborhood of $\vec0$. 

Applying Theorem~\ref{thm:meas} to the probability measure with density proportional to $\exp\{-\tau_N\psi_N(\vec x)\}$, and then using the invariance of total variation distance under the invertible linear transformation $\vec x\mapsto\sqrt{\tau_N}A_N\vec x$, gives
\be\label{bip tv gaussian approx}
\big\|\check\mu_N^d-\gamma_N\big\|_{\TV}\to0,
\ee
where $\check\mu_N^d$ is the law of $\sqrt{\tau_N}\vec\eta\in\br^N$. Using the above identification $d=N$ and $\la=\tau_N$, this holds if $N^2/\tau_N\to0$ and $\log\tau_N/N\to0$. By monotonicity of total variation under pushforward,
\be\label{bip pushforward tv}
\big\|(\I_N)_\#\check\mu_N^d-(\I_N)_\#\ga_N\big\|_{\TV}\to0.
\ee
Thus the true rescaled fluctuation field and the reconstructed Gaussian field have asymptotically the same law. It remains only to identify the continuum limit of the Gaussian measures $(\I_N)_\#\ga_N$.

As is easily seen, the corresponding reconstructed covariance is $\I_N Q_N^{-1} \I_N^*:L^2\to L^2$. The expected continuum covariance is $Q^{-1}$, where 
\bs\label{bip continuum Hessian}
\langle \eta,Q \eta\rangle_{L^2}
=& \int_{S^1} b(t)[(\CK \eta)(t)]^2\dd t
+\rho\int_{S^1} \big[\eta^2(t)+(\eta'(t))^2\big]\dd t,\\
b(t):=&\pa_1^2\Phi\big((\CK u_*)(t),g(t),t\big).
\es
In operator notation,
\be\label{bip continuum Hessian operator}
Q=\CK^*\left[b(t)\,\CK\right]+\rho(-\pa_t^2+\Id):\,H^2\to L^2.
\ee
As usual, periodic boundary conditions are understood. 

\begin{lemma}\label{lem:conv of gaus meas} 
Suppose that the additional Assumption~\ref{ass:extra} holds. The reconstructed Gaussian measures converge weakly: $(\I_N)_\#\ga_N \Rightarrow \CN(\vec 0,Q^{-1})$ in $L^2$.
\end{lemma}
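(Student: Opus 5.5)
Since the measures $(\I_N)_\#\ga_N$ and $\CN(\vec 0,Q^{-1})$ are centered Gaussians on $L^2$, I will prove two facts: tightness of $\{(\I_N)_\#\ga_N\}$ in $L^2$, and convergence of the characteristic functionals. The latter reduces to showing that for every $\varphi\in L^2$,
\[
\langle \varphi,\I_N Q_N^{-1}\I_N^*\varphi\rangle_{L^2}\to\langle\varphi,Q^{-1}\varphi\rangle_{L^2}.
\]
Tightness will follow as for the priors in Appendix~\ref{ssec:Gaussian tightness}. By \eqref{QN controls R inv}, now with $\tau=1$ and $\kappa=\rho$, we have $Q_N\gtrsim \rho L_N$ uniformly in $N$. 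Hence $Q_N^{-1}\le c L_N^{-1}$, and the second moments $\E\|\I_N\vec\eta\|_{h^1_N}^2$-type quantities, together with the trace-type bounds used for $\widetilde\nu_N$, are dominated by those of the prior $\nu_N$ with $\kappa=\rho/c$. The BV-compactness argument from Lemma~\ref{lem:conv minim} then gives tightness, because $\E\|\vec\eta\|_{h_N^1}^{2}$ itself is not bounded but the fractional-smoothness bounds from the prior proof carry over by domination of covariances.

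For the covariance convergence I use the variational characterization. The vector $\vec v_N:=Q_N^{-1}\I_N^*\varphi$ minimizes
\[
J_N(\vec v)=\tfrac12\langle\vec v,Q_N\vec v\rangle-\langle \I_N\vec v,\varphi\rangle_{L^2},
\]
and $\langle\varphi,\I_NQ_N^{-1}\I_N^*\varphi\rangle=-2\min J_N$. Similarly $v=Q^{-1}\varphi$ minimizes
\[
J(v)=\tfrac12\langle v,Qv\rangle-\langle v,\varphi\rangle
\]
over $H^1$. I then repeat the $\Gamma$-convergence argument of Lemma~\ref{lem:conv minim}. For the recovery sequence I take $\vec v_N=(N\I_N)^*v$ and use Lemma~\ref{lem:recon props}\eqref{cons eq}. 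For the liminf, the minimizers are bounded in $h^1_N$, so BV compactness applies, together with Lemma~\ref{lem:recon props}\eqref{weak ineq}. Both directions then require the convergence of the data term:
\[
\tfrac1N\sum_i b_i(\CK_N\vec v_N)_i^2\to\int b\,(\CK v)^2\,\dd t \quad\text{whenever } \I_N\vec v_N\to v \text{ in } L^2 .
\]
Once both minimum values converge, the quadratic forms converge.

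The main obstacle is this data-term convergence, specifically $b_i\to b(t_i)$ uniformly. This needs $\CK_N\vec u_{*,N}$ to converge to $\CK u_*$ uniformly at the nodes. Since $\I_N\vec u_{*,N}\to u_*$ in $L^2$ by Lemma~\ref{lem:conv minim}, and $\CK:L^2\to C$ is bounded, $\CK\I_N\vec u_{*,N}\to\CK u_*$ in $C$. The local averages then converge uniformly, and continuity of $\pa_1^2\Phi$ on compacts together with $g_i\to g(t_i)$ uniformly gives $\max_i|b_i-b(t_i)|\to0$. The same reasoning, as in Lemma~\ref{lem:PsiNPsi}, handles $(\CK_N\vec v_N)_i$.

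Assumption~\ref{ass:extra} should enter in the linear term and in identifying the limit of $\I_N^*$-related quantities when only weak convergence is available. In the liminf step I only have $\I_N\vec v_N\to v$ strongly along subsequences, which suffices. However, I expect to need Assumption~\ref{ass:extra} to control $\CK_N$ versus the averaging adjoint. Either the partition-of-unity condition makes $(N\I_N^d)^*$ consistent with $\I_N^*$-type averaging, or compactness of $\CK^*$ upgrades weak to strong convergence of $\CK\I_N\vec v_N$ in $C$ when only a weak limit of $\I_N\vec v_N$ is available. I would first try to get by with strong $L^2$ convergence from BV compactness and invoke Assumption~\ref{ass:extra} only where uniform convergence of the discrete data term needs it.
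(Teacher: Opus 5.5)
Your proposal is correct, and for the central step (convergence of the covariance forms) it takes a genuinely different route from the paper.

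\textbf{The paper's route.} It argues in strong form. For smooth $\phi$ it applies $\tilde Q_N=NQ_N$ to the sampled continuum solution $\tilde\I_N^*h$, where $Qh=\phi$. It shows the residual is $o(1)$ in $\ell^\infty$ and controls the error through $\|A_N\|_{\ell^2\to\ell^\infty}\les 1$. It then extends to all of $L^2$ by density and the uniform bound on $\|\I_NQ_N^{-1}\I_N^*\|_{L^2\to L^2}$.

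\textbf{Your route.} You argue at the level of energies. The minimizers of $J_N$ are bounded in $h^1_N$, by comparing with $J_N(0)=0$ and using $Q_N\ge\rho L_N$. BV compactness then gives strong $L^2$ subsequential limits. Lemma~\ref{lem:recon props} supplies the lower bound and the recovery sequence, and the data term converges continuously. This treats every $\varphi\in L^2$ directly and needs no regularity of $Q^{-1}\varphi$.

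\textbf{Assumption~\ref{ass:extra} is not needed.} Your final paragraph is unnecessary, because nothing in your argument uses Assumption~\ref{ass:extra}. The quadratic form contains only the local averages $(N\I_N^d)^*\CK\I_N\vec v_N$ of a function that converges uniformly, and the linear term contains only $\I_N$. The paper needs Assumption~\ref{ass:extra} precisely because the strong form of $\tilde Q_N$ contains the synthesis map $\I_N^d$ acting on grid vectors. That map produces the oscillating factor $\sum_i w(Nt-i)$, which tends to $1$ only weakly. In the variational formulation this operator never appears. So your route actually proves the lemma under Assumptions~\ref{ass:det apert} and~\ref{ass:setting} alone.

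\textbf{What the paper's route buys.} It gives pointwise ($\ell^\infty$) closeness of the discrete solution $\vec h_N$ to the samples of $h$. This is stronger than convergence of the minimum values and is the natural starting point if one wants rates.

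\textbf{Two small points.}
\begin{enumerate}
\item For tightness, BV compactness plays no role; as you note, $\E\|\vec\eta\|_{h^1_N}^2$ is unbounded. What works is exactly the paper's argument. Since $b_i\ge 0$ for large $N$, $Q_N\ge\rho L_N$, hence $Q_N^{-1}\le\rho^{-1}L_N^{-1}$. Inserting this into the $T$-weighted second-moment bound of Appendix~\ref{ssec:Gaussian tightness} gives uniform bounds.
\item In the liminf step, Lemma~\ref{lem:recon props}\eqref{weak ineq} is stated for $f\in H^1$, whereas your subsequential limit is a priori only in $L^2$. The Fourier proof in Appendix~\ref{sec:recon props} shows both that the limit lies in $H^1$ and that the inequality holds, so this is harmless. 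The paper uses the same fact in Lemma~\ref{lem:conv minim}.
\end{enumerate}
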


From \eqref{two priors} and \eqref{bip continuum Hessian operator}, $\rho\CL\le Q$, so $0\le Q^{-1}\le (\rho\CL)^{-1}$. Since $\CL^{-1}$ is trace class on $L^2$, so is $Q^{-1}$ and $\CN(\vec 0,Q^{-1})$ is a Gaussian probability measure on $L^2$. The lemma is proved in Appendix~\ref{sec:conv gaus}. As is well-known, TV convergence implies weak convergence. Hence, combining \eqref{bip pushforward tv} with Lemma~ \ref{lem:conv of gaus meas} proves the following result.

\begin{lemma}\label{lem:bip final gaussian fluct} Suppose that the additional Assumption~\ref{ass:extra} holds. The reconstructed fluctuation measures converge weakly: $(\I_N)_\#\check\mu_N^d \Rightarrow \CN(\vec 0,Q^{-1})$ in $L^2$ when $N^2/\tau_N\to0$ and $\log\tau_N/N\to0$.
\end{lemma}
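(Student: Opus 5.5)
The plan is to combine the total-variation estimate \eqref{bip pushforward tv} with the Gaussian limit of Lemma~\ref{lem:conv of gaus meas}, through a triangle inequality on bounded Lipschitz test functions. Weak convergence on the separable metric space $L^2$ is equivalent to $\int F\,\dd m_N\to\int F\,\dd m$ for every bounded Lipschitz $F:L^2\to\br$, so it suffices to check this for such $F$.

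\textbf{Step 1 (TV bound).} Under $N^2/\tau_N\to0$ and $\log\tau_N/N\to0$, the hypotheses of Theorem~\ref{thm:meas} hold for $f=\psi_N$, $d=N$, $\la=\tau_N$:
\begin{itemize}
\item convexity of $\psi_N$ follows from Assumption~\ref{ass:setting}\eqref{bmin};
\item $\nabla^2\psi_N(\vec0)=I_N$ by \eqref{at minimizer};
\item the Lipschitz bound on the Hessian follows from Lemma~\ref{lem:psi ders bdd} by the mean value theorem;
\item the strict global minimum at $\vec0$ follows from strong convexity.
\end{itemize}
Theorem~\ref{thm:meas} then gives $\TV\les N/\tau_N^{1/2}\to0$. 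Total variation is invariant under bijections and contracts under pushforward, which yields \eqref{bip pushforward tv}.

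\textbf{Step 2 (combining).} For bounded Lipschitz $F$,
\[
\Big|\int F\,\dd(\I_N)_\#\check\mu_N^d-\int F\,\dd\CN(\vec0,Q^{-1})\Big|
\le 2\|F\|_{L^\infty}\big\|(\I_N)_\#\check\mu_N^d-(\I_N)_\#\ga_N\big\|_{\TV}
+\Big|\int F\,\dd(\I_N)_\#\ga_N-\int F\,\dd\CN(\vec0,Q^{-1})\Big|.
\]
The first term vanishes by Step 1. The second vanishes by Lemma~\ref{lem:conv of gaus meas}, which uses Assumption~\ref{ass:extra}.

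\textbf{Main obstacle.} All the genuine difficulty lies in the two ingredients that are taken as given: Lemma~\ref{lem:conv of gaus meas}, whose proof is deferred to the appendix, and the applicability of Theorem~\ref{thm:meas}. Once these are granted, the argument is a direct triangle inequality. The only point needing care is checking that the rate conditions in the statement are exactly those required by Theorem~\ref{thm:meas} with $d=N$ and $\la=\tau_N$.
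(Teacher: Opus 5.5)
Your proposal is correct and follows the paper's argument. You verify the hypotheses of Theorem~\ref{thm:meas} for $\psi_N$ with $d=N$, $\la=\tau_N$ to obtain \eqref{bip tv gaussian approx}--\eqref{bip pushforward tv}, then combine the vanishing TV distance with Lemma~\ref{lem:conv of gaus meas} via a triangle inequality on bounded test functions; restricting to Lipschitz $F$ is harmless but unnecessary, since the TV term controls any bounded measurable $F$.
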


Recall that $(\I_N)_\#\check\mu_N^d$ is the law of $\xi_N=\sqrt{\tau_N}\I_N\vec\eta$, see \eqref{bip rescaled fluct field}. The convergence results in this and the preceding section justify the following approximation:
\be\label{approx family}
\I_N\vec f_N\approx u_*+\tau_N^{-1/2}\xi,
\quad \xi\sim\CN(\vec 0,Q^{-1}),
\ee
provided $N$ is sufficiently large and the growth conditions in Lemma~\ref{lem:bip final gaussian fluct} are satisfied. This provides a convenient approximation to the entire family of discrete BIPs for large $N$. Note that \eqref{approx family} does not assert that the center approximation error $u_*-\I_N\vec u_{*,N}$ is $o(\tau_N^{-1/2})$ in any norm. The term $\tau_N^{-1/2}\xi$ describes the residual posterior uncertainty after the measurements have been incorporated. The discretization error in approximating the center may occur on a different scale.

\section{Compatibility of the constructive and coupled limits}\label{sec:Compatibility}

The coupled-limit theorem and the constructive theorem are compatible in the sense that the small-noise limit of the continuum posterior $\mu$ constructed in Section~\ref{sec:constr BIP}, see \eqref{two posters}, has the same MAP estimate $u_*$ and Gaussian fluctuation law $\CN(\vec 0,Q^{-1})$ as the coupled limit obtained in Section~\ref{sec:small-noise-high-reg}.

\subsection{Matching MAP estimate}
The first step is to introduce the large parameters $\tau_N$ and $\kappa_N$ into the
constructive BIP formulation in Section~\ref{sec:constr BIP} in the same way as in \eqref{PsiN def}. The analog of the continuum prior $\nu$ in \eqref{two priors} becomes a small-noise Gaussian measure 
\be\label{sn prior}
\nu_N^c=\CN\big(0,(\tau_N\rho\CL)^{-1}\big),\quad \CL=-\pa_t^2+\Id,\quad \tau_N\to\infty.
\ee
The continuum posterior becomes
\be\label{cont post}
\dd\mu_N^c(f)=(Z_N^c)^{-1} \exp\{-\tau_N\Psi(f)\}\dd\nu_N^c(f),\quad
Z_N^c=\int_{L^2}\exp\{-\tau_N\Psi(f)\}\dd\nu_N^c(f),
\ee
where $\Psi$ is defined in \eqref{Psi def} with $\tau=1$. 

Throughout this section, the superscript $c$ indicates that the corresponding measures are defined on the continuum state space, rather than on $\br^N$, as the subscript $N$ might otherwise suggest.

Viewed with speed $\tau_N$, the Gaussian prior \eqref{sn prior} satisfies a small-noise LDP whose rate function is the Cameron--Martin energy $R_{\mathrm{CM}}(f)$ given in \eqref{PsiRCM} below \cite[Section 4.9, Corollary 4.9.3]{bogachev1998}. Clearly, this is a good rate function: the sets $\{f\in L^2: R_{\mathrm{CM}}(f)\le a\}$ are compact for any $a\in\br$. 

By Assumption~\ref{ass:setting}\eqref{bmin}, $-\Psi$ is bounded above. Therefore the tail condition in \cite[eq.~(4.3.2)]{dembo2010} is automatically satisfied. Moreover, $\Psi:L^2\to\br$ is continuous by Assumption~\ref{ass:setting}\eqref{Phi lip} and the continuity of $\CK:L^2\to C$. Hence, by \cite[Theorem~4.3.1 and Exercise~4.3.11]{dembo2010}, the exponentially tilted measures $\mu_N^c$ satisfy an LDP with good rate function

\be
I(f)=\Psif(f)-\inf_{L^2} \Psif,
\ee
where
\be\label{PsiRCM}
\Psif(f)=\Psi(f)+R_{\mathrm{CM}}(f),\quad 
R_{\mathrm{CM}}(f):=\begin{cases}\tfrac{\rho}2\|f\|_{H^1}^2,& f\in H^1,\\
\infty,& f\not\in H^1,
\end{cases}
\ee
and $R_{\mathrm{CM}}$ denotes the Cameron--Martin regularization term. 
Moreover, the functional $\Psif$ in \eqref{PsiRCM} is exactly the same as in the coupled-limit case \eqref{bip continuum action}.

By Lemma~\ref{lem:conv minim}, $\Psif$ has a unique minimizer $u_*$, so this LDP implies $\mu_N^c \Rightarrow \delta_{u_*}$ weakly on $L^2$. Consequently, the MAP profile obtained from the constructive continuum posterior agrees with the MAP profile obtained in the coupled limit.

\subsection{The Gaussian matching step on the constructive side}
\label{ssec:constructive-gaussian-matching}

We next prove that, after centering at $u_*$ and rescaling by $\sqrt{\tau_N}$, the constructive continuum posterior $\mu_N^c$ has the same Gaussian small-noise limit
$\CN(\vec 0,Q^{-1})$ as the coupled-limit posterior. More precisely, we prove that
\be
\pi_N^c:=\mathcal L_{\mu_N^c}
\left(\sqrt{\tau_N}(f-u_*)\right) \Rightarrow \CN(\vec 0,Q^{-1}),
\ee
where $Q$ is defined in \eqref{bip continuum Hessian}, \eqref{bip continuum Hessian operator}.

Set $\gamma=\CN(\vec 0,(\rho\CL)^{-1})$. By the Cameron--Martin theorem applied to the prior \cite[Corollary 2.4.3 and Theorem 2.4.5]{bogachev1998}, the fluctuation posterior $\pi_N^c$ has density
\be
\frac{d\pi_N^c}{d\gamma}(h)
\propto \exp\big\{-\sqrt{\tau_N}\,\ell_{u_*}(h)-\tau_N\Psi(u_*+\tau_N^{-1/2}h)\big\},
\ee
where $\ell_{u_*}$ denotes the Cameron--Martin linear functional associated with $u_*$ (see \cite[Theorem 2.8]{daprato2006}). The functional admits the representation
\be
\ell_{u_*}(h)=\langle \rho\CL u_*,h\rangle_{L^2},\quad h\in L^2.
\ee
Here we have used that $u_*\in H^2$, which follows from \eqref{inv EL weak L2} because $\CK^*:L^2\to C$ is bounded (see Assumption~\ref{ass:setting}\eqref{Kbdd}) and the first term on the left in \eqref{inv EL weak L2} is a continuous function. In fact, only the inclusion in $L^2$ would suffice here. Furthermore,
\be\label{interm deriv}
\begin{aligned}
\frac{d\pi_N^c}{d\gamma}(h)
\propto &\exp\big\{-\sqrt{\tau_N}
\left[\ell_{u_*}(h)+ D\Psi(u_*)[h]\right] -\tau_N B_N(h)\big\},
\end{aligned}
\ee
where 
\be\label{BN def}
B_N(h):=\Psi(u_*+\tau_N^{-1/2}h)-\Psi(u_*)-\tau_N^{-1/2}D\Psi(u_*)[h].
\ee

By convexity of $\Psi$, $B_N(h)\ge0$. The term in brackets in \eqref{interm deriv} vanishes by the Euler--Lagrange equation for $u_*$ (see \eqref{inv EL weak L2}). Thus
\be
\frac{d\pi_N^c}{d\gamma}(h)
\propto \exp\left\{-\tau_N B_N(h)\right\}.
\ee
From \eqref{BN def} and the continuity of $\pa_1^2\Phi$ (Assumption~\ref{ass:setting}\eqref{Phi lip}) it follows that
\be
F_N(h):=\exp\{-\tau_N B_N(h)\}\to F(h):=\exp\big\{-\tfrac{1}{2}D^2\Psi(u_*)[h,h]\big\}
\ee
for any $h\in L^2$. Therefore, by the dominated convergence theorem, 
\bs
\hat Z_N^c:=\int_{L^2}F_N(h) \dd \ga(h)\to &
Z^c:=\int_{L^2}F(h)\dd \ga(h),\\
(\hat Z_N^c)^{-1}\int_{L^2}G(h)F_N(h) \dd \ga(h)
\to & (Z^c)^{-1}\int_{L^2}G(h)F(h) \dd \ga(h),
\es
for any bounded continuous functional $G$. Therefore $\pi_N^c\Rightarrow \pi$ in $L^2$, where
\bs
&\frac{d\pi}{d\gamma}(h)
=(Z^c)^{-1}
\exp\left\{-\tfrac{1}{2}D^2\Psi(u_*)[h,h]\right\},\\
&D^2\Psi(u_*)[h,h]=\int b(t)(\CK h(t))^2\dd t.
\es
Since $\pi$ is a quadratic tilt of the Gaussian measure $\gamma=\CN(\vec 0,(\rho\CL)^{-1}\big)$, it is Gaussian with precision operator 
\be
Q=\CK^*\left[b(t)\,\CK\right]+\rho\CL, 
\ee
thereby matching the coupled Gaussian limit \eqref{bip continuum Hessian}, \eqref{bip continuum Hessian operator}.

\appendix

\section{Proof of Lemma~\ref{lem:conv of priors}}\label{sec:conv prior}

\subsection{Tightness of the Gaussian measures $\widetilde\nu_N$ in $L^2$}\label{ssec:Gaussian tightness}

For simplicity, we assume $\kappa=1$. The proof for the general $\kappa>0$ is the same. Denote $e(t):=\exp(2\pi i t)$. For the Fourier calculations in this section, we work in the complexifications of $L^2$ and $\br^N$, extending all real operators complex-linearly.

Let $e_k(x)=e(kx)$, $k\in\BZ$, be the Fourier basis of the fluctuation space $L^2$. Define the operator $T:L^2\to L^2$ by its action on the basis:
\be\label{Toper def}
T e_k=(1+|k|)^{-\al}e_k,\quad k\in \BZ,
\ee
for some $0<\al<1/2$. Clearly, $T$ is injective, nonnegative, compact and $\widetilde\nu_N(T(L^2))=1$ for all $N$.

By the properties of Gaussian measures in Hilbert space (see the proof of \cite[Example 3.7.10, part (ii)]{bogachev1998}) and using that $\widetilde\nu_N:=(\I_N)_\#\nu_N$,
\bs\label{T-est}
T_N:=&\int_{L^2} \|T^{-1}x\|_{L^2}^2\widetilde\nu_N(\dd x)=\sum_{k\in\BZ} (1+|k|)^{2\al}\langle \I_N L_N^{-1} \I_N^* e_k,e_k\rangle_{L^2}\\
=&\sum_{k\in\BZ} (1+|k|)^{2\al}\langle L_N^{-1} \I_N^* e_k,\I_N^* e_k\rangle_{\ell^2}.
\es

The operator $L_N:\mathbb C^N\to\mathbb C^N$ introduced in \eqref{LN def} is diagonal in the periodic discrete Fourier basis 
\be\label{discr F basis}
\vec e_k:=\big(1, e(k/N),\dots e(k(N-1)/N)\big)\in\mathbb C^N,\quad k=0,1,\dots N-1. 
\ee
Its eigenvalues are
\be\label{LN evals}
\mu_k(L_N)=4N\sin^2(\pi k/N)+(1/N),\quad k=0,\ldots,N-1.
\ee
Hence, if $k_N:=\mathrm{dist}(k,N\BZ)$, then $\mu_k(L_N)\gtrsim (1+k_N^2)/N$, $k=0,\ldots,N-1$. Furthermore,
\be\label{two bases}
(\I_N^* e_k)_j=\int_{j/N}^{(j+1)/N}e(kx)\dd x=\frac{c_{k,N}}N e(jk/N),\ j=0,\dots,N-1,\quad c_{k,N}:=\frac{e(k/N)-1}{2\pi i (k/N)}.
\ee
Thus, $\I_N^* e_k$ is an eigenvector of $L_N$ corresponding to the eigenvalue $\mu_k(L_N)$.

Substitute \eqref{two bases} into \eqref{T-est}. Computing the following quantities 
\be\label{ckn bnd}
|c_{k,N}|^2= \frac{\sin^2(\pi k/N)}{(\pi k/N)^2},\quad \|\vec e_k\|_{\ell^2}^2=N,
\ee
gives
\bs\label{T-est v3}
T_N\lesssim& \sum_{k\in\BZ} \frac{(1+|k|)^{2\al}}{(1+|k_N|)^2}\frac{\sin^2(\pi k/N)}{(k/N)^2}.
\es
If $k=0$, the bound in \eqref{ckn bnd} (and the same factor in \eqref{T-est v3}) is computed by continuity.

Suppose $k=r+mN$, where $|r|\le N/2$. Consider first the case $m=0$. The contribution of these terms is bounded by (up to an absolute factor)
\bs\label{T-est v4}
\sum_{r=0}^{N/2} \frac{(1+r)^{2\al}}{(1+r)^2}<\infty.
\es
The terms with $m\not=0$ are bounded by (up to an absolute factor)
\bs\label{T-est v5}
\sum_{m\ge1}\sum_{r=1}^{N/2} \frac{(mN)^{2\al}}{r^2}\frac{\sin^2(\pi r/N)}{m^2}
\lesssim \sum_{m=1}^{\infty}m^{2(\al-1)} \frac{N^{2\al}}{N^2}N\to0,
\es
where we used that $\al<1/2$.

Thus, the sequence of $T_N$ is uniformly bounded, and by \cite[Example 3.8.13]{bogachev1998}, the family of measures $\widetilde\nu_N$ is tight.

\subsection{Convergence of covariance forms}\label{ssec:Gaussian observ}

Let $\phi,\psi\in C^\infty$. In this subsection we prove that
\be\label{conv observ}
\langle \I_N L_N^{-1}\I_N^*\phi,\psi\rangle_{L^2}
\to \langle \CL^{-1}\phi,\psi\rangle_{L^2}.
\ee
Expressing both sides in terms of the Fourier coefficients of $\phi$ and $\psi$ relative to the basis $e_k(x)$, \eqref{conv observ} is equivalent to
\bs
\sum_{k\in \kind}
\frac{\left(\sum_{\ell\in\BZ}
c_{k+\ell N,N}\widetilde\phi_{k+\ell N}\right)
\overline{\left(\sum_{q\in\BZ}c_{k+qN,N}\widetilde\psi_{k+qN}\right)}}
{1+4N^2\sin^2\left(\frac{\pi k}{N}\right)}
\to
\sum_{k\in\BZ}
\frac{\widetilde\phi_k\overline{\widetilde\psi_k}}
{1+4\pi^2k^2},
\es
where $\kind=\{\lfloor -N/2\rfloor,\dots,\lfloor N/2-1\rfloor\}$. Given that $\widetilde\phi_k,\widetilde\psi_k=O(|k|^{-a})$, $k\to\infty$, for any $a>0$, this is equivalent to proving
\bs
\sum_{k\in \kind}
\frac{|c_{k,N}|^2\widetilde\phi_k \overline{\widetilde\psi_k}}
{1+4N^2\sin^2\left(\frac{\pi k}{N}\right)}+O(1/N)
\to 
\sum_{k\in\BZ}\frac{\widetilde\phi_k\overline{\widetilde\psi_k}}
{1+4\pi^2k^2},
\es
and the desired assertion is immediate from dominated convergence.

\subsection{Identification of the Gaussian limit}\label{ssec:id G lim}

Consider the operator $C_N=\I_N L_N^{-1} \I_N^*$ and denote $C_\infty=\CL^{-1}$ (see \eqref{two priors}). By construction,
\be\label{Cnorm bnd}
\|C_N\|_{L^2\to L^2} \le \|\I_N\|_{\br^N\to L^2}\|L_N^{-1}\|_{\br^N\to \br^N}\|\I_N^*\|_{L^2\to \br^N}.
\ee
We assume here, for example, that the $\ell^2$ norm is used in $\br^N$. By \eqref{LN evals}, $\|L_N^{-1}\|_{\br^N\to \br^N}\les N$. By the definition of $\I_N$,
\bs
\textstyle \|\I_N \vec f\|_{L^2}^2=\frac1N\sum_{k=0}^{N-1}f_k^2=\frac1N \|\vec f\|_{\ell^2}^2,
\es
so $\|\I_N\|_{\br^N\to L^2}=N^{-1/2}$. Thus $\|\I_N^*\|_{L^2\to \br^N}=N^{-1/2}$, and \eqref{Cnorm bnd} gives that the norms $\|C_N\|_{L^2\to L^2}$, $N\in\N$, are uniformly bounded. Thus, by density of smooth functions in $L^2$ and the uniform bound,
\be\label{gen conv}
\langle C_N\phi,\psi\rangle_{L^2}\to \langle C_\infty\phi,\psi\rangle_{L^2},\quad \forall \phi,\psi\in L^2.
\ee

Since $L^2$ is complete and separable and the Gaussian measures $\widetilde\nu_N$ are tight in $L^2$, every subsequence has a weakly convergent further subsequence by Prohorov's theorem \cite[Theorem 2.5.1]{Khoshnevisan2002}. Pick any convergent subsequence so that $\widetilde\nu_{N_k}\Rightarrow \mu$ for some $\mu$. By the definition of weak convergence, $\ell_\# \widetilde\nu_{N_k}\Rightarrow \ell_\#\mu$ for any $\ell\in (L^2)^*$. Let $\ell(x)=\langle x,h\rangle_{L^2}$ for some $h\in L^2$. 

By the continuity theorem \cite[Theorem 26.3]{billingsley2012}, the characteristic functions of $\ell_\# \widetilde\nu_{N_k}$, given by $s\to\exp\big\{-\langle C_{N_k} h,h\rangle_{L^2}s^2/2\big\}$ converge to that of $\ell_\#\mu$ pointwise. By \eqref{gen conv}, $\langle C_{N_k} h,h\rangle_{L^2}\to \langle C_\infty h,h\rangle_{L^2}$. Hence the characteristic function of $\ell_\#\mu$ is $\exp\big\{-\langle C_\infty h,h\rangle_{L^2}s^2/2\big\}$ and $\ell_\#\mu$ a centered Gaussian.

Since this holds for any $\ell\in (L^2)^*$, $\mu$ is Gaussian by definition. Then $\mu$ is determined by its covariance. By \eqref{gen conv}, its covariance is $C_\infty$ and $\mu=\CN(\vec 0,C_\infty)$. Since the family is tight, every subsequence has a weakly convergent further subsequence. The preceding argument shows that every such subsequential limit equals $\CN(\vec 0,C_\infty)$. Therefore $\widetilde\nu_N\Rightarrow\CN(\vec 0,C_\infty)$.

\section{Proof of Lemma~\ref{lem:recon props}}\label{sec:recon props}

We use the discrete and continuum bases $e_k(x)$ and $\vec e_k$ introduced in Appendix~\ref{ssec:Gaussian tightness}. As before, we denote $\kind=\{\lfloor -N/2\rfloor,\dots,\lfloor N/2-1\rfloor\}$. Writing
\be\label{f fN defs}
f(t)=\sum_{k\in \BZ}\tilde f_k e_k(t),\quad 
\vec f_N=\sum_{k\in\kind}\tilde h_{k,N} \vec e_k,
\ee
we have 
\bs
\|f\|_{H^1}^2=\sum_{k\in \BZ}(1+4\pi^2k^2)|\tilde f_k|^2,\quad
\|\vec f_N\|_{h_N^1}^2=\sum_{k\in\kind}\big(1+4\pi^2k^2|c_{k,N}|^2\big)|\tilde h_{k,N}|^2,
\es
where $c_{k,N}$ is defined in \eqref{two bases}. Moreover, the convergence $\I_N\vec f_N\to f$ in $L^2$ implies
\be\label{L2 conv coefs}
\sum_{k\in\BZ}|\tilde f_k-\overline{c}_{k,N}\tilde h_{k,N}|^2\to0.
\ee
Here we defined $\tilde h_{k,N}$, $k\in\BZ$, by periodicity: $\tilde h_{k+\ell N,N}=\tilde h_{k,N}$ for any $\ell\in\BZ$ and $k\in\kind$. Claim~\ref{weak ineq} is now obvious. Indeed, by \eqref{L2 conv coefs}, one has $\tilde h_{k,N}\to \tilde f_k$, $|k|\le K$, for each fixed $K>0$. Moreover, $c_{k,N}\to1$ for each $k$. The tail of the series giving $\|f\|_{H^1}^2$ that corresponds to $|k|>K$ can be made arbitrarily small by choosing $K$ large, while the corresponding tail in $\|\vec f_N\|_{h_N^1}^2$ is nonnegative.

To prove claim \ref{cons eq}, we first show that $\tfrac1N\|(N\I_N)^*f\|_{\ell^2}^2\to\|f\|_{L^2}^2$. A simple manipulation shows that
\be
\|f-\I_N(N\I_N)^*f\|_{L^2}^2=\|f\|_{L^2}^2-\tfrac1N\|(N\I_N)^*f\|_{\ell^2}^2.
\ee
By an easy calculation,
\be\label{th formula}
\tilde h_{k,N}=\sum_{\ell\in\BZ}c_{k+\ell N,N}\tilde f_{k+\ell N},\quad k\in\kind.
\ee
Then
\bs\label{th formula v2}
&\sum_{\ell\in\BZ\setminus\{0\}}|c_{k+\ell N,N}\tilde f_{k+\ell N}|
\les  \frac{A_{k,N}}{N},\quad k\in\kind,\\ 
&A_{k,N}^2:=F_{k,N}^2-(1+4\pi^2k^2)|\tilde f_k|^2,\\
&F_{k,N}^2:=\sum_{\ell\in\BZ}\big(1+4\pi^2(k+\ell N)^2\big)|\tilde f_{k+\ell N}|^2,
\es
where $A_{k,N}$ denotes the nonnegative square root. This is proved by multiplying and dividing each term in the sum on the first line of \eqref{th formula v2} by $\big(1+4\pi^2(k+\ell N)^2\big)^{1/2}$, and then applying the Cauchy--Schwarz inequality, the bound $|c_{m,N}|\le 1$, $m\in\BZ$, and the elementary estimate
\be
\sup_{k\in\kind}
\sum_{\ell\in\BZ\setminus\{0\}}
\frac{1}{1+4\pi^2(k+\ell N)^2}
\les N^{-2}.
\ee

Observe that
\bs\label{Ak prop}
\sum_{k\in\kind}A_{k,N}^2=&\sum_{k\in\kind} F_{k,N}^2-\sum_{k\in\kind} (1+4\pi^2k^2)|\tilde f_k|^2\\
=&\sum_{k\in\BZ\setminus\kind} (1+4\pi^2k^2)|\tilde f_k|^2\to0.
\es
Therefore,
\bs\label{for H1 conv}
\sum_{k\in\kind}|\tilde h_{k,N}|^2
=\sum_{k\in\kind}\Big(|c_{k,N}\tilde f_k|^2
+O\Big(|\tilde f_k|\frac{A_{k,N}}{N}\Big)+O\Big(\frac{A_{k,N}^2}{N^2}\Big)\Big).
\es
Given that $\sum_{k\in\kind} A_{k,N}^2<\infty$ uniformly in $N$, we get that $\tfrac1N\|(N\I_N)^*f\|_{\ell^2}^2\to\|f\|_{L^2}^2$.

Thus, to finish the proof of claim \ref{cons eq}, it remains to find the limit of the following sum:
\bs\label{last sum}
\sum_{k\in\kind}k^2|c_{k,N}\tilde h_{k,N}|^2
=\sum_{k\in\kind}\Big(k^2|c_{k,N}^2\tilde f_k|^2
+O\big(|k\tilde f_k|A_{k,N}\big)+O\big(A_{k,N}^2\big)\Big).
\es
Using \eqref{Ak prop} and that $f\in H^1$,
\bs\label{tail to 0}
\Big[\sum_{k\in\kind}|k\tilde f_k|A_{k,N}\Big]^2
\le \sum_{k\in\kind} |k\tilde f_k|^2\sum_{k\in\kind} A_{k,N}^2\to0.
\es
Combining \eqref{Ak prop}--\eqref{tail to 0} with dominated convergence for the main term in \eqref{last sum} proves $\|\vec f_N\|_{h_N^1}\to\|f\|_{H^1}$.

\section{Proof of Lemma~\ref{lem:tail bnd}}\label{sec:tail bnd proof}

Fix some $R>0$ and define the following local and tail integrals:
\be
I_{\mathrm{in}}(\la)=\left(\frac{\la}{2\pi}\right)^{d/2}\int_{\|\vec x\|\leq R\sqrt\e}e^{-\la f(\vec x)}\dd \vec x,\quad
I_{\mathrm{out}}(\la)=\left(\frac{\la}{2\pi}\right)^{d/2}\int_{\|\vec x\|>R\sqrt\e}e^{-\la f(\vec x)}\dd \vec x.\label{eq:in out}
\ee
Similarly to \cite[Section~7]{katsevich2026a}, we need an upper bound on $I_{\mathrm{out}}(\la)$ and a lower bound on $I_{\mathrm{in}}(\la)$. The lower bound on $I_{\mathrm{in}}(\la)$ can be taken directly from \cite{katsevich2026a}, because it is proven under Assumption~\ref{ass:f}. By \cite[eq. (7.2)]{katsevich2026a} we have
\be\label{inlb}
I_{\mathrm{in}}(\la)\geq \tfrac12 \mu^{-d/2},\quad \mu:=1+2c_f R\sqrt\e,
\ee 
where we have used that $f(\vec x)\le \mu \|\vec x\|^2/2$, $\|\vec x\|\leq R\sqrt\e$, by \eqref{eq:Taylor-f}.

Let
\bs
T_1&=\Big(\frac{\la}{2\pi}\Big)^{d/2}\int_{\norm{\vec x}\ge R\sqrt\e} e^{-\varkappa\sqrt{\la d}\|\vec x\|}\,\dd \vec x,\quad
T_2=\Big(\frac{\la}{2\pi}\Big)^{d/2}\int_{\norm{\vec x}\ge R\sqrt\e} e^{-\la\varkappa\|\vec x\|^2/2}\,\dd \vec x.
\es
By~\eqref{f-suff} and \eqref{eq:in out}, $I_{\mathrm{out}}(\la)\leq T_1+T_2$. By \cite[eq. (7.9) and (7.10)]{katsevich2026a},
\be\label{T2T3}
T_1 \leq e^{-a d},\quad  T_2\leq e^{-a d}
\ee
for any fixed $a>0$, where $R$ is sufficiently large. Therefore
\be
I_{\mathrm{out}}(\la)/I_{\mathrm{in}}(\la) \les \exp\{-ad\}
\ee
for any fixed $a>0$, assuming $\e>0$ is sufficiently small. This immediately implies \eqref{tail bnd}.

\section{Proof of Lemma~\ref{lem:conv of gaus meas}}\label{sec:conv gaus}

First, we establish tightness of the Gaussian measures $(\I_N)_\#\ga_N$ in $L^2$. We use the same Fourier basis $e_k(x)=e(kx)$ of the fluctuation space $L^2$ and the same operator $T:L^2\to L^2$ defined at the beginning of Appendix~\ref{ssec:Gaussian tightness}. As before, in the Fourier calculations in this appendix, we work in the complexifications of $L^2$ and $\br^N$, extending all real operators complex-linearly. Similarly to \eqref{T-est},
\bs\label{T-est v2}
T_N:=&\int_{L^2} \|T^{-1}x\|_{L^2}^2(\I_N)_\#\ga_N(\dd x)
=\sum_{k\in\BZ} (1+|k|)^{2\al}\langle Q_N^{-1} \I_N^* e_k,\I_N^* e_k\rangle_{\ell^2}.
\es
Comparing \eqref{bip discrete Hessian} with \eqref{LN prep}, \eqref{LN def}, we have $\langle \eta,L_N\eta\rangle_{\ell^2} \lesssim \langle \eta,Q_N\eta\rangle_{\ell^2}$, which implies
\be\label{lbnd v2}
\langle Q_N^{-1} \I_N^* e_k,\I_N^* e_k\rangle_{\ell^2}\les \langle L_N^{-1} \I_N^* e_k,\I_N^* e_k\rangle_{\ell^2}.
\ee
Arguing exactly as in the rest of the proof of Lemma~\ref{lem:conv of priors} we obtain that the sequence of $T_N$ is uniformly bounded and the family of measures $(\I_N)_\#\ga_N$ is tight.

Next we prove that
\be\label{conv observ v2}
\langle \I_N Q_N^{-1}\I_N^*\phi,\psi\rangle_{L^2}
\longrightarrow \langle Q^{-1}\phi,\psi\rangle_{L^2}
\ee
for any $\phi,\psi\in C^\infty$. Recall that $Q$ is defined in \eqref{bip continuum Hessian operator}. In this section we use the notation $\tilde\I_N^*=N\I_N^*$, $(\tilde\I_N^d)^*=(N\I_N^d)^*$, and $\tilde Q_N:=NQ_N$. Let $\vec h_N\in\br^N$ and $h\in H^2$ be solutions to the following equations
\be
\tilde Q_N\vec h_N= \tilde\I_N^*\phi,\quad Qh=\phi.
\ee
Analyzing the accuracy of the discrete approximation (see \eqref{bip discrete Hessian} and \eqref{bip continuum Hessian operator}) we get
\be\label{rvec bnd}
\tilde Q_N(\tilde\I_N^* h)=\tilde\I_N^*\phi-\vec r_N,\quad \|\vec r_N\|_{\ell^\infty}=o(1),
\ee
where $\vec r_N$ is the residual error. Indeed, the continuity of $\CK^*:L^2\to C$ (Assumption~\ref{ass:setting}\eqref{Kbdd}) and \eqref{bip continuum Hessian operator} imply that $h''\in C$. Hence, by standard analysis, the accuracy of the approximation of the regularization term is $o(1)$. 

Consider next the likelihood part of $\tilde Q_N$. By \eqref{bip discrete Hessian}, it can be written in the form
\bs\label{QN lklhd part}
\tilde Q_N^L:=\tilde\I_N^*\CK^*\I_N^d\mathrm{diag}(\vec b)(\tilde\I_N^d)^*\CK\I_N.
\es
In \eqref{rvec bnd} we apply $\tilde Q_N$ and, therefore, $\tilde Q_N^L$ to a vector that approximates pointwise values of a continuous function in the $\ell^\infty$ metric. By the stated properties of $\CK$, $\I_N$ and $\I_N^d$, we have
\be\label{interm conv res}
(\mathrm{diag}(\vec b)(\tilde\I_N^d)^*\CK\I_N)(\vec f_N^{(0)}+o(1))
=\vec{(b\CK f)}_N^{(0)}+o(1);\quad (\vec f_N^{(0)})_i:=f(t_i),\ i=0,1,\dots,N-1,
\ee
for some $f\in C$. Here we use the continuity of $\pa_1^2\Phi$ (Assumption~\ref{ass:setting}\eqref{Phi lip}) to obtain convergence of the corresponding discrete coefficients. 

Given that $\CK^*:L^2\to C$ is continuous, the key remaining step is to show that $\I_N^d \vec f_N^{(0)} \to f$ in a suitable sense in $L^2$ for any $f\in C$. This is the place where the background Assumptions~\ref{ass:det apert} and \ref{ass:setting} are not sufficient and we need to impose Assumption~\ref{ass:extra}. It is easy to see that the desired property holds when either one of the conditions in Assumption~\ref{ass:extra} holds. Indeed, if Assumption~\ref{ass:extra}\eqref{apert extra} holds, then $\I_N^d \vec f_N^{(0)} \to f$ strongly in $L^2$. If Assumption~\ref{ass:extra}\eqref{K extra} holds, then we only have weak convergence in $L^2$, but the compactness of $\CK^*:L^2\to C$ gives the desired strong convergence $\CK^*\I_N^d \vec f_N^{(0)} \to \CK^* f$ in $C$. The outermost operator $\tilde\I_N^*$ in \eqref{QN lklhd part} presents no difficulties because it is uniformly bounded from $C$ to $\ell^\infty$. The remaining $o(1)$ vector in \eqref{interm conv res} presents no difficulties as well, and we get the bound in \eqref{rvec bnd}.

Denoting the reconstruction error $\vec e_N=\vec h_N-\tilde\I_N^* h$, we have $\tilde Q_N\vec e_N=\vec r_N$. This implies
\bs\label{last part}
\langle \I_N Q_N^{-1}\I_N^*\phi,\psi\rangle_{L^2}
=\langle \tilde Q_N^{-1}\tilde\I_N^*\phi,\I_N^*\psi\rangle_{\ell^2}
=\langle \vec h_N,\I_N^*\psi\rangle_{\ell^2}=\tfrac1N\langle \tilde\I_N^* h,\tilde\I_N^*\psi\rangle_{\ell^2}
+\tfrac1N\langle \vec e_N,\tilde\I_N^*\psi\rangle_{\ell^2}.
\es

Applying Lemma~\ref{lem:QNAN BIP} with $\tau=1$ and $\kappa=\rho$, we obtain the bounds \eqref{eq:A-bounds} for the present operators $A_N$ and $Q_N$. Therefore, by \eqref{rvec bnd},
\be
\|\vec e_N\|_{\ell^\infty}\les \tfrac1N \|Q_N^{-1/2}\vec r_N\|_{\ell^2}
\les \tfrac1N N^{1/2}\|\vec r_N\|_{\ell^2}=o(1).
\ee
Hence the last term on the far right in \eqref{last part} goes to zero. Clearly, the first term on the far right in \eqref{last part} goes to $\langle h,\psi\rangle_{L^2}$, and \eqref{conv observ v2} is proved.

Using that $\rho\CL_N\le Q_N$, the argument in Appendix~\ref{ssec:id G lim} implies that the norms $\|\I_N Q_N^{-1}\I_N^*\|_{L^2\to L^2}$, $N\in\N$, are uniformly bounded. Consequently, by density, \eqref{conv observ v2} extends to all $\phi,\psi\in L^2$. Combining this convergence with tightness, the subsequence and characteristic-function argument of Appendix~\ref{ssec:id G lim} applies verbatim, and hence $(\I_N)_\# \ga_N \Rightarrow\CN(\vec 0,Q^{-1})$.

\bibliographystyle{abbrv}
\bibliography{My_Collection}
\end{document}